\DeclareMathAlphabet{\cmcal}{OMS}{cmsy}{m}{n}
\def\blfootnote{\xdef\@thefnmark{}\@footnotetext}
\newtheoremstyle{thm}
  {3pt}
  {3pt}
  {\em}
  {0pt}
  {\bfseries}
  {}
  {5pt}
  {}
\newtheoremstyle{rem}
  {3pt}
  {3pt}
  {}
  {0pt}
  {\bfseries}
  {.}
  {5pt}
  {}
\newtheorem{thm}{Theorem}[section]
\newtheorem{lem}[thm]{Lemma}
\newtheorem{prop}[thm]{Proposition}
\newtheorem{defn}[thm]{Definition}
\newtheorem{conj}[thm]{Conjecture}
\theoremstyle{definition}
\theoremstyle{rem}
\newtheorem{rem}[thm]{{Remark}}
\newtheorem{theorem}{Theorem}[section]
\newtheorem{corollary}[thm]{Corollary}
\newtheorem{lemma}[thm]{Lemma}
\newtheorem{proposition}[thm]{Proposition}
\newtheorem{definition}[thm]{Definition}
\newtheorem{remark}[thm]{{Remark}}
\numberwithin{equation}{section} \numberwithin{table}{section}
\newtheorem*{thm*}{Theorem}
\newtheorem*{rem*}{Remark}
\newtheorem*{rems*}{Remarks}
\newtheorem*{exam*}{Example}
\newtheorem*{exams*}{Examples}
\newcommand{\neutralize}[1]{\expandafter\let\csname c@#1\endcsname\count@}
\def\bos#1{{\mathbf{#1}}}
 \newcommand{\pa}[1]{\frac{\partial}{\partial{#1}}}
  \newcommand{\prt}[2]{\frac{\partial{#1}}{\partial{#2}}}
  \newcommand{\pr}{\operatorname{prim}}
  \newcommand{\nc}{\newcommand}
  \newcommand{\be}{\begin{eqnarray*}}
  \newcommand{\ee}{\end{eqnarray*}}
  \newcommand{\bea}{\begin{eqnarray}}
  \newcommand{\eea}{\end{eqnarray}}
   \nc{\bei}{\begin{itemize}}
   \nc{\eei}{\end{itemize}}
   \nc{\bee}{\begin{enumerate}}
   \nc{\eee}{\end{enumerate}}
   \nc{\bet}{\begin{theorem}}
   \nc{\eet}{\end{theorem}}
   \nc{\bed}{\begin{definition}}
   \nc{\eed}{\end{definition}}
   \nc{\bel}{\begin{lemma}}
   \nc{\eel}{\end{lemma}}
   \nc{\bep}{\begin{proposition}}
   \nc{\eep}{\end{proposition}}
   \nc{\bec}{\begin{corollary}}
   \nc{\eec}{\end{corollary}}
   \nc{\ber}{\begin{remark}}
   \nc{\eer}{\end{remark}}
   \nc{\beex}{\begin{example}}
   \nc{\eeex}{\end{example}}
   \nc{\bpm}{\begin{pmatrix}}
   \nc{\epm}{\end{pmatrix}}
   \nc{\bspm}{\left(\begin{smallmatrix}}
   \nc{\espm}{\end{smallmatrix}\right)}
\newcommand{\cA}{\mathcal{A}}
\newcommand{\cC}{\mathcal{C}}
\newcommand{\cE}{\mathcal{E}}
\newcommand{\cF}{\mathcal{F}}
\newcommand{\cM}{\mathcal{M}}
\newcommand{\cO}{\mathcal{O}}
\newcommand{\cR}{\mathcal{R}}
\newcommand{\cX}{\mathcal{X}}
\newcommand{\bA}{\mathbb{A}}
\newcommand{\bC}{\mathbb{C}}
\newcommand{\bF}{\mathbb{F}}
\newcommand{\bN}{\mathbb{N}}
\newcommand{\bQ}{\mathbb{Q}}
\newcommand{\bR}{\mathbb{R}}
\newcommand{\bZ}{\mathbb{Z}}
\newcommand{\BP}{\mathbf{P}}
\newcommand{\fra}{\mathfrak a}
\nc{\frf}{\mathfrak{f}} 
\newcommand{\frg}{\mathfrak g}
\nc{\frs}{\mathfrak{s}}  
\nc{\frt}{\mathfrak{t}} 
\nc{\fru}{\mathfrak{u}}
\nc{\lsl}{\mathfrak{sl}}
\nc{\lgl}{\mathfrak{gl}}
\nc{\upsi}{\underline{\psi}}
\nc{\uchi}{\underline{\chi}}
\DeclareMathOperator{\Tr}{Tr}
\DeclareMathOperator{\Hom}{Hom}
\DeclareMathOperator{\val}{val}
\DeclareMathOperator{\Res}{Res}
\DeclareMathOperator{\id}{id}
\DeclareMathOperator{\End}{End}
\newcommand{\lra}{\longrightarrow}    
\nc{\surjto}{\twoheadrightarrow}
\nc{\ts}{\times}
\nc{\ds}{\displaystyle}
\nc{\nd}{\noindent}  
\nc{\ud}{\underline}
\nc{\ov}{\overline}
\nc{\maplra}[1]{\buildrel #1 \over \lra}
\nc{\mapto}[1]{\buildrel #1 \over \to}
\nc{\setb}[1]{\{  #1\}}
 \nc{\cHom}{\mathcal{H}om}
\nc{\cdruur}[8] {\begin{CD} 
#1 @>#2>> #3\\ 
@AA#4A @AA#5A\\ 
#6 @>#7>> #8 
\end{CD} }
\nc{\cdrddr}[8] {\begin{CD} 
#1 @>#2>> #3\\ 
@VV#4V @VV#5V\\ 
#6 @>#7>> #8 
\end{CD} }
\nc{\dia}[8]{\xymatrix{ 
&#1 \ar@{-}[ld]_{#2} \ar@{-}[rd]^{#3} \\
#4 \ar@{-}[rd]_{#6} & &#5 \ar@{-}[ld]^{#7}\\ 
&#8} }
\nc{\diam}[9]{\xymatrix{ 
&#1 \ar@{-}[ld]_{#2}  \ar@{-}[d]^{#3} \ar@{-}[rd]^{#4} \\
#5 \ar@{-}[rd]_{#8}     & #6 \ar@{-}[d]_{#9}      & #7   \ar@{-}[ld]^{2} \\
& \bQ} } 
\nc{\sumn}[2][n]{#2_{1} +#2_{2}+ \cdots + #2_{#1}}
\nc{\poly}[3][n]{#2_{#1}#3^{#1} +#2_{#1-1}#3^{#1-1}  \cdots + #2_{1} #3+ #2_0}
\nc{\dpoly}[3][n]{#1#2_{#1}#3^{#1-1} +(#1-1)#2_{#1-1}#3^{#1-1}  \cdots +2 #2_{2} #3+ #2_1}
\nc{\mpoly}[3][n]{#3^{#1} +#2_{#1-1}#3^{#1-1}  \cdots + #2_{1} #3+ #2_0}
\nc{\vpar}[4]{    \left \{ \begin{array}{cc} #1 & \textrm{if } #2, \\
&\\
#3 & \textrm{if } #4. 
\end{array}\right. }
\nc{\vparr}[4]{    \left \{ \begin{array}{cc} #1 & \textrm{if } #2, \\
&\\
#3 & \textrm{if } #4, 
\end{array}\right. }
\nc{\ary}[5]{#1: \left\{ \begin{array}{ll} #2 &\mapsto #3 \\ #4 &\mapsto #5 \end{array} \right.}
 \nc{\bedm}{\begin{displaymath}}
 \nc{\eedm}{\end{displaymath}}
 \nc{\art}{\hbox{\bf Art}^\Z}
 \nc{\bvx}{\bos{B\!\!V}_{\! \!X}}
\newcommand{\pmat}{\left(\begin{matrix}}   
\newcommand{\epmat}{\end{matrix}\right)}   
\newcommand{\psmat}{\left(\begin{smallmatrix}}    
\newcommand{\epsmat}{\end{smallmatrix}\right)}
\nc{\twotwo}[4]{\pmat #1 & #2 \\ #3 & #4 \epmat}
\nc{\thrthr}[9]{\pmat #1 & #2 & #3 \\ #4 & #5 & #6 \\ #7 & #8 & #9 \epmat}
\nc{\stwotwo}[4]{\psmat #1 & #2 \\ #3 & #4 \epsmat}
\nc{\sthrthr}[9]{\psmat #1 & #2 & #3 \\ #4 & #5 & #6 \\ #7 & #8 & #9 \epsmat}
\def\eqalign#1{\null\,\vcenter{\openup\jot\m@th
\ialign{\strut\hfil$\displaystyle{##}$&$\displaystyle{{}##}$\hfil
\crcr#1\crcr}}\,}
\def\eqn#1#2{
\xdef #1{(\nsecsym\the\meqno)}
\global\advance\meqno by1
$$#2\eqno#1\eqlabeL#1
$$}
\def\b{\beta}
\def\g{\gamma}  \def\G{\Gamma}
\def\l{\lambda}  
\def\m{\mu}
\def\o{\omega}  \def\O{\Omega}
\def\cA{{\mathcal A}}
\def\Z{\mathbb{Z}}
\def\ma{\mathfrak{a}}
\def\mm{\mathfrak{m}}
\def\rd{\partial}
\def\wt{\hbox{\it wt}}
\def\ch{\hbox{\it ch}}
\begin{document}


\catcode`\@=11 

\global\newcount\nsecno \global\nsecno=0
\global\newcount\meqno \global\meqno=1
\def\newsec#1{\global\advance\nsecno by1
\eqnres@t
\section{#1}}
\def\eqnres@t{\xdef\nsecsym{\the\nsecno.}\global\meqno=1}
\def\sequentialequations{\def\eqnres@t{\bigbreak}}\xdef\nsecsym{}

\def\draftmode{\message{ DRAFTMODE }

{\count255=\time\divide\count255 by 60 \xdef\hourmin{\number\count255}
\multiply\count255 by-60\advance\count255 by\time
\xdef\hourmin{\hourmin:\ifnum\count255<10 0\fi\the\count255}}}
\def\nolabels{\def\wrlabeL##1{}\def\eqlabeL##1{}\def\reflabeL##1{}}
\def\writelabels{\def\wrlabeL##1{\leavevmode\vadjust{\rlap{\smash%
{\line{{\escapechar=` \hfill\rlap{\tt\hskip.03in\string##1}}}}}}}%
\def\eqlabeL##1{{\escapechar-1\rlap{\tt\hskip.05in\string##1}}}%
\def\reflabeL##1{\noexpand\llap{\noexpand\sevenrm\string\string\string##1}
}}

\nolabels

\def\eqn#1#2{
\xdef #1{(\nsecsym\the\meqno)}
\global\advance\meqno by1
$$#2\eqno#1\eqlabeL#1
$$}

\def\eqalign#1{\null\,\vcenter{\openup\jot\m@th
\ialign{\strut\hfil$\displaystyle{##}$&$\displaystyle{{}##}$\hfil
\crcr#1\crcr}}\,}

\def\ket#1{\left|\bos{ #1}\right>}\vspace{.2in}
   \def\bra#1{\left<\bos{ #1}\right|}
\def\oket#1{\left.\bos{ #1}\right>}
\def\obra#1{\left<\bos{ #1}\right.}
\def\epv#1#2#3{\left<\bos{#1}\left|\bos{#2}\right|\bos{#3}\right>}
\def\qbvk#1#2{\bos{\left(\bos{#1},\bos{#2}\right)}}
\def\Hoch{{\tt Hoch}}
\def\rrd{\up{\rightarrow}{\rd}}
\def\lrd{\up{\leftarrow}{\rd}}
   \nc{\hr}{[\![\hbar]\!]}
   \nc{ \cAb}{\cA\!(b)}
   \nc{\bNn}{\bZ_{\geq 0}}
   \nc{\Ab}{A\!(b)}
   \nc{\modulo}{\operatorname{mod}}
   
\def\foot#1{\footnote{#1}}

\catcode`\@=12 

\def\fr#1#2{{\textstyle{#1\over#2}}}
\def\Fr#1#2{{#1\over#2}}
\def\ato#1{{\buildrel #1\over\longrightarrow}}

\newcommand{\QFT}{\operatorname{QFT}}

\draftmode

\title{Batalin-Vilkovisky formalism in the $p$-adic Dwork theory}

\author{Dohyeong Kim}
\email{dohyeongkim@snu.ac.kr}

\author{Jeehoon Park} 
\email{jeehoonpark@postech.ac.kr}

\author{Junyeong Park}
\email{junyeongp@gmail.com}


%

\date{}

\blfootnote{2010 \textit{Mathematics Subject Classification.} Primary  11M38, 81T20 ; Secondary 13D10, 14D15, 14F30, 18G55.  }

\blfootnote{Key words and phrases: Zeta functions, $p$-adic Dwork Frobenius operator, Batalin-Vilkovisky algebras, Differential graded Lie algebras, Deformation theory, Bell polynomials.}

\maketitle

\begin{abstract}
The goal of this article is to develop BV (Batalin-Vilkovisky) formalism in the $p$-adic Dwork theory. Based on this formalism, we explicitly construct a $p$-adic dGBV algebra (differential Gerstenhaber-Batalin-Vilkovisky algebra) for a smooth projective complete intersection variety $X$ over a finite field, whose cohomology gives the $p$-adic Dwork cohomology of $X$, and its cochain endomorphism (the $p$-adic Dwork Frobenius operator) which encodes the information of the zeta function $X$. 
As a consequence, we give a modern deformation theoretic interpretation of Dwork's theory of the zeta function of $X$ and derive a formula for the $p$-adic Dwork Frobenius operator in terms of homotopy Lie morphisms and the Bell polynomials. 



\end{abstract}

\tableofcontents

\section{Introduction}

We fix a rational odd prime number $p$ and a positive integer $a$.
Let $\bF_q$ be the finite field of $q$ elements where $q=p^a$.
Let $n$ and $k$ be positive integers such that $n \geq k \geq 1$. 
We use $\underline x = [x_0, x_1, \cdots, x_n]$ as a homogeneous coordinate system of the projective $n$-space $\BP^n_{\bF_q}$\
over a finite field $\bF_q$.
Let $X=X_{\underline G}$ be a smooth complete intersection of dimension $n-k$ in the projective space $\BP^{n}_{\bF_q}$ 
and let $G_1(\ud x), \cdots, G_k(\ud x)$ be the defining homogeneous polynomials in
$\bF_q[\ud x]=\bF_q[x_0, x_1, \cdots, x_n]$ such that $\deg(G_i)=d_i$ for $i = 1, \cdots, k$.
It is well known (due to Dwork; see \cite{AdSp08} for example) that the zeta function of $X$ may be written in the form
\begin{eqnarray} \label{zpt}
Z(X/\bF_q, T) = \frac{P(T)^{(-1)^{n-k-1}}}{(1-T)(1-qT) \cdots (1-q^{n-k} T)}
\end{eqnarray}
where $P(T) \in 1 + T \bZ[T]$. The reciprocal roots of $P(T)$ are units at all non-archimedean places except places over $p$. They have absolute value $q^{(n-k)/2}$ by Deligne \cite{De74} at any archimedean prime; at $p$-adic places, the Katz conjecture saying that the $p$-adic Newton polygon lies on or above the Hodge polygon is proven by Mazur \cite{Ma73}. In \cite{AdSp08}, Adolphson and Sperber gave a new proof of Mazur's result on the Katz conjecture by computing an explicit basis for the $p$-adic Dwork cohomology.

The goal of this article is to reveal a hidden Batalin-Vilkovisky formalism in the theory of zeta functions of smooth projective complete intersection varieties defined over a finite field. BV (Batalin-Vilkovisky) formalism is a gauge-fixing method in quantum field theory, which is an important quantization method; we refer to a well-written book \cite{Mnev} on the subject. 
This leads to recapture the deformation theory of zeta functions of algebraic varieties, which was invented by Dwork (see \cite{Dw62} and \cite{Dw64} for example) and developed by Adolphson and Sperber (see \cite{AdSp06} and \cite{AdSp08} for example), using the modern deformation theoretic  view point based on the dGBV (differential Gerstenhaber-Batalin-Vilkovisky) algebra and dgla (differential graded Lie algebra). 
As an application, we derive an explicit algebraic formula for the $p$-adic Dwork Frobenius operator whose characteristic polynomial computes $Z(X/\bF_q,T)$, in terms of homotopy Lie morphisms (so called, $L_\infty$-homotopy morphisms) and the Bell polynomials.



\subsection{Main results}\label{sub1.1}

Our BV formalism for $p$-adic Dwork theory recapturing the work of Adolphson and Sperber \cite{AdSp08} leads to the following theorem.

\begin{theorem}\label{ftheorem}
There exists a $p$-adic dGBV (differential Gerstenharber-Batalin-Vilkovisky) algebra\footnote{The notation $S$ means a function defined in \eqref{dwf} associated to $G_1(\ud x), \cdots, G_k(\ud x)$.} $(\widetilde \cAb, \cdot, \tilde K_S, \ell^{\tilde K_S}, \tilde Q_S)$ over a $p$-adic field\footnote{See \eqref{kk}.} $\Bbbk$ associated to $X_{\ud G}$ and a cochain map $\Psi_S:(\widetilde \cAb, \tilde K_S) \to (\widetilde \cAb, \tilde K_S)$ which satisfy the following properties:

(a) 
 there exists an explicit $L_\infty$-endomorphism\footnote{An $L_\infty$-algebra (homotopy Lie algebra)
$(V, \ud \ell)$ is a $\Z$-graded vector space $V$ with an $L_\infty$-structure 
$\ud \ell =(\ell_1,\ell_2,\ell_3,\cdots)$, where $\ell_1$ is a differential such that $(V,\ell_1)$ is a cochain complex, $\ell_2$ is a graded Lie bracket which satisfies the graded Jacobi identity up to homotopy $\ell_3$ etc. An $L_\infty$-morphism $\ud \phi=(\phi_1,\phi_2,\cdots)$ is a morphism between $L_\infty$-algebras, say $(V,\ud \ell)$
and $(V^\prime, \ud \ell^\prime)$, such that $\phi_1$ is a cochain map of the underlying cochain complex, 
which is a Lie algebra homomorphism
up to homotopy $\phi_2$, etc. See subsection \ref{sub3.1}.}
 $\ud \phi^{\Psi_S}=(\phi^{\Psi_S}_1, \phi^{\Psi_S}_2, \cdots )$ of $(\widetilde \cAb, K_S, \ell_2^{\tilde K_S})$ such that $\phi^{\Psi_S}_1=\Psi_S$ and there is a decomposition $\widetilde \cAb =\bigoplus_{-N \leq m \leq 0} \widetilde \cAb^m$ where $\widetilde \cAb^0$ is a $p$-adic Banach commutative algebra over $\Bbbk$ and
$$
 \widetilde \cAb^{-s} = \bigoplus_{1\leq i_1 < \cdots < i_s\leq N} \widetilde \cAb^0 \cdot \eta_{i_1} \cdots \eta_{i_s}, \quad 0 \leq s \leq N.
$$

(b) there exists a separated and exhaustive decreasing filtration $\{F^i \widetilde \cAb \}_{i\in \bZ}$ consisting of $\cO_{\Bbbk}$-submodules so that $(\widetilde \cAb, \tilde K_S)$ becomes a filtered complex, and
there is a $\bF_q$-module cochain isomorphism
\begin{eqnarray*}
\cR: (F^0  \widetilde \cAb/ F^1 \widetilde \cAb, \tilde K_S) \xrightarrow{\sim} (\cA, Q_S)
\end{eqnarray*}
where $(\cA,Q_S)$ is a cdga (commutative differential graded algebra) over $\bF_q$ associated to $X$.

(c) the $0$-th cohomology $H^0(\widetilde \cAb, \tilde K_S)$ where $\tilde K_S= \ell_1^{\tilde K_S}$ is a finite dimensional $\Bbbk$-vector space whose dimension is equal to the degree of $P(T)$.

(d) the $\Bbbk$-linear completely continuous operator $\phi_1^{\Psi_S}=\Psi_S: (\widetilde \cAb, \tilde K_S) \to (\widetilde \cAb, \tilde K_S)$, which is a cochain map, satisfies 
\begin{eqnarray}\label{keyzeta}
P(q^k T)= \det(1 - T \cdot \Psi_S \big|H^0(\widetilde \cAb, \tilde K_S)).
\end{eqnarray}
\end{theorem}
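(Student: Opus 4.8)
The plan is to construct the dGBV algebra by hand from overconvergent $p$-adic power series and a Koszul-type exterior algebra, to encode Dwork's deformation in the $S$-twisted BV operator $\tilde K_S$, and then to identify the resulting complex together with its endomorphism $\Psi_S$ with the $p$-adic Dwork cohomology and Dwork Frobenius of Adolphson--Sperber \cite{AdSp08}, from which (b), (c) and (d) are imported; property (a) is obtained separately by running the general passage from BV algebras to $L_\infty$-algebras. Concretely I would take $\widetilde\cAb^0$ to be the $p$-adic Banach $\Bbbk$-algebra of power series in the ambient homogeneous variables $\underline x$ and $k$ auxiliary variables dual to $G_1,\dots,G_k$, cut out by the growth condition indexed by $b$ that encodes overconvergence, and adjoin one odd generator $\eta_i$ per variable to form $\widetilde\cAb=\bigoplus_{-N\le m\le 0}\widetilde\cAb^m$ with $\widetilde\cAb^{-s}=\bigoplus_{1\le i_1<\dots<i_s\le N}\widetilde\cAb^0\cdot\eta_{i_1}\cdots\eta_{i_s}$, which is exactly the decomposition claimed in (a). The untwisted BV Laplacian is the odd second-order divergence operator $\Delta_0=\sum_i\partial_{x_i}\partial_{\eta_i}$ (with suitable weights); one sets $\tilde K_S=\ell_1^{\tilde K_S}$ to be the conjugate of $\Delta_0$ by the Dwork splitting function attached to the function $S$ of \eqref{dwf}, $\ell_2^{\tilde K_S}$ to be the induced BV bracket, and $\tilde Q_S$ to be the associated classical differential (a derivation of the product). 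Verifying the dGBV axioms --- $\tilde K_S^2=0$, the seven-term BV relations for $(\cdot,\tilde K_S,\ell_2^{\tilde K_S})$, and that $\tilde Q_S$ is a derivation --- reduces to the master equation for $S$, which is precisely Dwork's functional equation for the splitting function applied to the projectively normalized $S=-\pi\sum_i y_iG_i(\underline x)$.

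For (b) I would let $F^\bullet\widetilde\cAb$ be the decreasing filtration by $p$-adic order combined with auxiliary-variable degree, arranged so that $\tilde K_S$ is a filtered map; then $F^0\widetilde\cAb/F^1\widetilde\cAb$ discards both the overconvergence condition and the Dwork exponential and leaves precisely the Koszul complex over $\bF_q[\underline x]$ on the partials of $\sum_i y_iG_i$, i.e.\ the cdga $(\cA,Q_S)$ computing the primitive cohomology of the complete intersection $X$; the isomorphism $\cR$ is this reduction of leading terms, and its bijectivity is checked termwise.

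For (c) and (d) the key step is a dictionary identifying $(\widetilde\cAb,\tilde K_S)$ with the overconvergent Dwork complex of \cite{AdSp08} and $\Psi_S$ with their Dwork Frobenius operator (the composite of $p$-th power substitution on coordinates, multiplication by the splitting function, and the trace/section maps). Granting this, the complete-intersection acyclicity of \cite{AdSp08} forces $H^m(\widetilde\cAb,\tilde K_S)=0$ for $m\ne0$, so the cohomology is concentrated in top degree $0$, and their explicit basis computation gives $\dim_\Bbbk H^0(\widetilde\cAb,\tilde K_S)=\deg P(T)$, which is (c). For (d), complete continuity of $\Psi_S$ (Serre's theory of completely continuous operators on $p$-adic Banach spaces) legitimizes Dwork's trace formula, which expresses $\sum_{r\ge1}\#X(\bF_{q^r})\,T^r/r$ as the alternating sum over $m$ of $\sum_{r\ge1}\Tr(\Psi_S^{\,r}\mid H^m(\widetilde\cAb,\tilde K_S))\,T^r/r$ plus elementary factors; by acyclicity only $m=0$ survives, and exponentiating and comparing with \eqref{zpt} gives $P(q^kT)=\det(1-T\Psi_S\mid H^0(\widetilde\cAb,\tilde K_S))$, the shift $T\mapsto q^kT$ being the normalization contributed by the $k$ auxiliary variables.

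Finally, for (a): the data $(\tilde K_S,\ell_2^{\tilde K_S})$ already make $\widetilde\cAb$ a dg Lie (hence $L_\infty$) algebra, and a cochain endomorphism such as $\Psi_S$, which commutes with $\tilde K_S$ but does not respect the product --- hence not the bracket --- is upgraded to an $L_\infty$-endomorphism $\ud\phi^{\Psi_S}=(\phi_1^{\Psi_S},\phi_2^{\Psi_S},\dots)$ with $\phi_1^{\Psi_S}=\Psi_S$ by an explicit recursion whose closed form is a partial exponential Bell polynomial evaluated at the symmetrized multiplicativity defect of $\Psi_S$; the $L_\infty$-morphism identities then become a purely combinatorial identity in each arity. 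The main obstacle is precisely the dictionary used in (c)--(d): one must match the abstractly defined BV operator $\tilde K_S$ and cochain map $\Psi_S$ with Adolphson--Sperber's Dwork differential and Frobenius on the nose, normalizations and the $q^k$-twist included, because the deep inputs --- finite dimensionality, acyclicity away from degree $0$, and the zeta formula --- are available only through that matching; a secondary difficulty is extracting the closed Bell-polynomial form of the higher components $\phi_n^{\Psi_S}$ rather than a bare recursion. Once the dictionary is fixed, everything else is bookkeeping.
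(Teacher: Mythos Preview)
Your proposal is correct and follows essentially the same route as the paper: build $\widetilde\cAb$ as an exterior algebra over the overconvergent Banach algebra, twist the order-two BV Laplacian by the Dwork exponential $e^{\hat S}$, compare to the Adolphson--Sperber twisted de Rham complex via the odd Fourier transform $J$ (Proposition~\ref{clem}) to import (c) and (d) from \cite{AdSp08}, and obtain (a) from the descendant $L_\infty$-morphism recursion of Definition~\ref{defdesc}. Two small corrections of emphasis: the Maurer--Cartan/master equation for $\hat S$ is \emph{vacuous} here because $\widetilde\cAb^{1}=0$, so the dGBV axioms hold automatically once $\tilde\Delta$ is a second-order operator (Proposition~\ref{PBV}) and are not a consequence of any Dwork functional equation; and in the paper the Bell polynomials enter as a closed formula expressing $\Psi_S$ in terms of $\Psi_{\BP^n}$ and the Maurer--Cartan element $\Gamma$ (Theorem~\ref{stheorem}), rather than as the closed form for the higher components $\phi_n^{\Psi_S}$, which are produced by the bare recursion of Definition~\ref{defdesc}.
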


The existence of the above $p$-adic dGBV algebra is motivated by 0-dimensional quantum field theory for $X$; see the subsections \ref{sub2.2} and \ref{sub4.5} (in particular, Theorem \ref{mainthm}).
The $L_\infty$-morphism in (a) of Theorem \ref{ftheorem} is obtained by applying constructions in Definition~\ref{defdesc}, to the cochain endomorphism $\Psi_S$ of $(\widetilde \cAb, \tilde K_S)$. 
We refer to the subsections \ref{sub4.4}, \ref{sub5.1}, \ref{sub5.2}, and \ref{sub5.3} for its detailed proof.

As an application, we provide a deformation formula for $\Psi_S$ using this $L_\infty$-formalism and the Bell polynomials: see Theorem \ref{stheorem} for a precise statement. 


Finding a similar construction for a relative cohomology for a family of smooth projective complete intersections (or a more general family) seems to be a nice project. We also expect that there should be an analogous construction in the case of quasi-smooth projective complete intersection X in the projective simplicial toric variety over a finite field: see our future work.

Now we briefly explain the contents of each section.
The section \ref{sec2} is devoted to explanation of a geometric idea (the Gysin sequence and the Cayley trick in the subsection \ref{sub2.1}) and a physical idea (BV formalism and quantum field theory in the subsection \ref{sub2.2}) which motivate the article, when the $p$-adic field is replaced by the field of complex numbers.

The section \ref{sec3} is about a general theory of homotopy Lie algebras and their (formal) deformation theory.
The subsection \ref{sub3.1} is devoted to a brief explanation of homotopy Lie algebras and morphisms (also called, $L_\infty$-algebras and morphisms), and dGBV algebras.
Then, in the subsection \ref{sub3.2}, we explain the descendant $L_\infty$-algebras and morphisms and their explicit relationship to the deformation theory of cochain complexes. In the subsection \ref{sub3.3}, we explain how to modify the usual deformation theory so that the deformation of the $p$-adic Dwork Frobenius operator makes sense later.

The section \ref{sec4} is the main section which develops BV formalism of the $p$-adic Dwork theory. We begin with the theory over a finite field (the subsection \ref{sub4.0}).
In the subsection \ref{sub4.1}, we spell out a key question in the $p$-adic Dwork theory.
In the subsection \ref{sub4.2}, we set up basic notations.
Then we provide a proof of main part and (a) of Theorem \ref{ftheorem} in the subsections \ref{sub4.3} and \ref{sub4.4}.
The subsection \ref{sub4.5} is devoted to physical interpretation behind $p$-adic Dwork's theory.

The final section \ref{sec5} consists of two parts; the subsections \ref{sub5.1}, \ref{sub5.2}, and \ref{sub5.3} are devoted to proofs of (b),(c), and (d) of Theorem \ref{ftheorem} respectively, and the subsection \ref{sub5.4} is about the application (Theorem \ref{stheorem}) of our theory, namely an $L_\infty$-homotopy deformation formula of the $p$-adic Dwork operator using the Bell polynomials.

%
%

\subsection{Acknowledgement}
Jeehoon Park was supported by Samsung Science \& Technology Foundation (SSTF-BA1502). 
Dohyeong Kim was supported by Research Resettlement Fund for the new faculty of Seoul National University, by Simons Foundation grant 550033, and by National Research Foundation of Korea grants 2020R1C1C1A01006819 and 2019R1A6A1A10073437.


\section{Geometric idea and quantum field theory} \label{sec2}

\subsection{The geometric idea over $\bC$} \label{sub2.1}
In order to explain the geometric idea behind the work of Dwork, Adolphson and Sperber and our idea how we came up with BV formalism of the zeta function of $X$, 
we assume in this subsection that $X$ is defined over the field of complex numbers $\bC$ instead of $\bF_q$. 

If we are interested in the cohomology of the smooth projective complete intersection variety $X$ of dimension $n-k$, then the primitive middle dimensional cohomology
$H^{n-k}_{\operatorname{prim}}(X,\bC)$ is the most interesting piece because the other degree cohomologies and non-primitive pieces can be easily described in terms of the cohomology of the projective space $\BP^n$ due to the weak Lefschetz theorem and the Poincare duality. 
For the computation of $H^{n-k}_{\operatorname{prim}}(X,\bC)$, the Gysin sequence and ``the Cayley trick'' play important roles. There is a long exact sequence, called the Gysin sequence:
\begin{eqnarray*}
\cdots \to H^{n+k-1}(\BP^n,\bC) \to H^{n+k-1}(\BP^n \setminus X, \bC) \xrightarrow{\Res_X} H^{n-k}(X,\bC) \to H^{n+k}(\BP^n,\bC)  \to \cdots
\end{eqnarray*}
where $\Res_X$ is the residue map (see p.\,96 of \cite{Dim95}). This sequence gives rise to an isomorphism 
\begin{eqnarray}\label{resx}
\Res_X:H^{n+k-1}(\BP^n \setminus X, \bC) \xrightarrow{\sim} H^{n-k}_{\operatorname{prim}}(X,\bC).
\end{eqnarray}
The Cayley trick is about translating a computation of the cohomology of the complement of a complete intersection into a computation of the cohomology of the complement of a hypersurface in a bigger space. Let $\cE=\cO_{\BP^n}(d_1) \oplus \cdots \oplus \cO_{\BP^n}(d_k)$ be the locally free sheaf
of $\cO_{\BP^n}$-modules with rank $k$. Let $\BP(\cE)$ be the projective bundle associated to $\cE$ with fiber $\BP^{k-1}$ over $\BP^n$. Then $\BP(\cE)$ is the smooth projective toric variety with Picard group isomorphic to $\bZ^2$ whose (toric) homogeneous coordinate ring is given by
\begin{eqnarray} \label{fa}
A_{\BP(\cE)}:=\bC[y_1, \cdots, y_k, x_0, \cdots, x_n]
\end{eqnarray}
where $y_1, \cdots, y_k$ are new variables corresponding to $G_1, \cdots, G_k$. Let $z_1=y_1, \cdots, z_k=y_k$ and $z_{k+1}=x_0, \cdots, z_{n+k+1}=x_{n}$ so that $A_{\BP(\cE)}=\bC[\ud z]$.
There are two additive gradings $\ch$ and $\wt$, called the charge and the weight, corresponding to the Picard group $\bZ^2$: 
$$
\ch(y_i)=-d_i, \quad \text{for } i=1, \cdots, k, \quad \ch(x_{j}) = 1, \quad \text{for }  j=0, \cdots, n,
$$ 
$$
\wt(y_i)=1, \quad \text{for } i=1, \cdots, k, \quad \wt(x_{j}) = 0, \quad \text{for }  j=0, \cdots, n. 
$$ 
Then 
\begin{eqnarray}\label{dwf}
S(\ud y, \ud x):= \sum_{j=1}^k y_j G_j(\ud x) \in A_{\BP(\cE)}
\end{eqnarray}
defines a hypersurface $X_S$ in $\BP(\cE)$.
The natural projection map $\BP(\cE) \to \BP^n$ induces a morphism 
$
\BP(\cE)\setminus X_S \to \BP^n \setminus X
$
which can be checked to be a homotopy equivalence. Hence there exists an isomorphism 
\begin{eqnarray} \label{secisom}
s^*: H^{n+k-1}(\BP(\cE) \setminus X_S, \bC) \xrightarrow{\sim} H^{n+k-1}(\BP^n \setminus X, \bC)
\end{eqnarray}
where $s$ is a section to the projection map $\BP(\cE)\setminus X_S \to \BP^n \setminus X$.
The cohomology group $H^{n+k-1}(\BP(\cE) \setminus X_S, \bC)$ of a hypersurface complement in $\BP(\cE)$ can be described explicitly in terms of the de-Rham cohomology of $\BP(\cE)$ with poles along $X_S$.
Based on this, one can further show that (see Theorem 1 in \cite{Dim95} or \cite{Ko91} and see \cite{Gr69} for the pioneering work of Griffiths in the case $k=1$, the smooth projective hypersurface case)
\begin{eqnarray}\label{gm}
\left(A_{\BP(\cE)}/ Jac(S)\right)_{\ch=c_X} \xrightarrow{\sim}   H^{n+k-1}(\BP(\cE) \setminus X_S, \bC),
\end{eqnarray}
where
$Jac(S)$ is the Jacobian ideal of $S(\ud y, \ud x)$, and 
$$
c_X := \sum_{i=1}^k d_i - (n+1).
$$
Here the subindex $\ch=c_X$ means the submodule in which the charge is $c_X$.
Note that $Jac(S)$ is the sum of the images of the endomorphisms $\prt{ S }{y_i}, \prt{S}{x_j}$ of $A_{\BP(\cE)}$ ($i =1, \cdots, k, j=0, \cdots, n$).

Note that $\BP(\cE)\setminus X_S$ is a smooth affine variety\footnote{On the other hand, $\BP^n\setminus X$ is not affine when $k > 1$.} whose coordinate ring $B=A_{\BP(\cE)}[1/S]_{\ch=0,\wt=0}$ and
$$
H^{n+k-1}(\BP(\cE) \setminus X_S, \bC)= \Omega^{n+k+1}(\BP(\cE) \setminus X_S)/d\Omega^{n+k}(\BP(\cE) \setminus X_S)=\Omega^{n+k+1}_{B}/d\Omega^{n+k}_{B}.
$$

In fact, there is an explicit map $\varphi_S:A_{\BP(\cE)} \to \Omega^{n+k+1}_{B}$ which induces an isomorphism\footnote{This is theoretically more important than \eqref{gm} for us, since this isomorphism allows us to develop BV formalism for $H^{n+k-1}(\BP(\cE) \setminus X_S, \bC)$.}
 \begin{eqnarray}\label{gmtwo}
\bar\varphi_S:A_{\BP(\cE)}/V_S    \xrightarrow{\sim}    H^{n+k-1}(\BP(\cE) \setminus X_S, \bC),
 \end{eqnarray}
where 
$V_S$ is the sum of the images of the endomorphisms $\pa{y_i}+\prt{ S }{y_i}, \pa{x_j}+\prt{S}{x_j}$ of $A_{\BP(\cE)}$ ($i =1, \cdots, k, j=0, \cdots, n$).
There is no known linear map $A_{\BP(\cE)} \to \Omega^{n+k+1}_{B}$ which induces the isomorphism in \eqref{gm}. 

These isomorphisms in \eqref{gm} and \eqref{gmtwo} lead us to 
consider the following Lie algebra representation. Let $\frg_\bC$ be an abelian Lie algebra over $\bC$ of dimension $n+k+1$. Let $u_{1}, u_2, \cdots, u_{n+k+1}$ be a ${\bC}$-basis of $\frg_\bC$. 
We associate a Lie algebra representation $\rho$ on $A_{\BP(\cE)}$ of 
$\frg_\bC$ as follows:
\begin{eqnarray*}
\rho(u_i) := \pa{z_i}+\prt{ S(\ud z) }  {z_i}, \text { for $i=1, 2, \cdots, n+k+1$}.
\end{eqnarray*}
We extend this ${\bC}$-linearly to
get a Lie algebra representation $\rho: \frg_\bC\to\End_{{\bC}}(A_{\BP(\cE)})$. Then the 0-th Lie algebra homology is isomorphic to 
$A_{\BP(\cE)}/V_S$. Also, the $(n+k+1)$-th Lie algebra cohomology is isomorphic to $A_{\BP(\cE)}/V_S$. In fact, the Chevalley-Eilenberg cohomology complex is the twisted de-Rham complex $(\Omega_{\bA^{n+k-1}}^\bullet, d + dS)$ of the affine space $\bA^{n+k-1}$. On the other hand, one can consider the Chevalley-Eilenberg homology complex.
More precisely, we use the cochain complex $(\cA_\rho^\bullet, K_\rho)$, which we call \textit{the dual Chevalley-Eilenberg complex}, such that $H^i(\cA_\rho^\bullet, K_\rho)$ $ \simeq H_{-i}(\frg_\bC, A_{\BP(\cE)})$ for $i \in \Z$:
\be
\cA_{\rho}&=&\cA_{\rho}^\bullet= A_{\BP(\cE)}[\eta_{1},\eta_2, \cdots, \eta_{n+k+1}]=\bC[\ud z][\eta_1,\eta_2 \cdots, \eta_{n+k+1}], \\
K_\rho&=&  \sum_{i=1}^N \left(\prt{ S(\ud z)}{z_i} + \pa{z_i}\right) \pa{\eta_i}:\cA_{\rho} \to \cA_{\rho}.
\ee
 We have
\bea\label{rhob}
\xymatrix{0 \ar[r] & \cA_\rho^{-(n+k+1)} \ar[r]^-{K_\rho} & \cA_\rho^{-(n+k)} \ar[r]^-{K_\rho} & \cdots \ar[r]^-{K_\rho} & \cA_\rho^{-1} \ar[r]^-{K_\rho} & \cA_\rho^{0}=A_{\BP(\cE)} \ar[r] & 0}
\eea
where
$$
 \cA_\rho^{-s} = \bigoplus_{1\leq i_1 < \cdots < i_s\leq n+k+1} A_{\BP(\cE)} \cdot \eta_{i_1} \cdots \eta_{i_s}, \quad 0 \leq s \leq n+k+1.
$$
Since one easily sees that $H^{n+k-1-s}(\Omega_{\bA^{n+k-1}}^\bullet, d + dS) \xrightarrow{\sim}
H^s(\cA_\rho^\bullet, K_\rho)$ for $s \in \bZ$ (using the fact that $\frg_\bC$ is abelian), i.e. their differential module structures are isomorphic, either complexes can be used to study the primitive middle dimensional cohomology of $X$. 
In general, people preferred to use the twisted de-Rham complex (for example, \cite{AdSp08} and \cite{Dim95}), sometimes called the algebraic Dwork complex over $\bC$.

But our key observation is that $(\cA_\rho^\bullet, \cdot, K_\rho)$ provides a model for  BV formalism of certain 0-dimensional quantum field theory; its quantum master equation (the Maurer-Cartan equation of certain dgla) governs a deformation theory of a cochain complex $(\cA_\rho^\bullet, \cdot, K_\rho)$ and its cochain maps.
In the main body of the paper, we will develop BV formalism over $\bF_q$ and a $p$-adic field instead of $\bC$, and provide a modern deformation theoretic interpretation of Dwork's theory of the zeta function of $X$ over $\bF_q$. 


\subsection{Physical model: BV formalism and 0-dimensional quantum field theory}\label{sub2.2}

We continue to work with archimedean fields, namely, real numbers $\bR$ or complex numbers $\bC$. 

Here we set up a 0-dimensional field theory $\QFT_S^\bC$ and explain its BV formalism based on the subsection \ref{sub2.1}. BV formalism is a way of understanding the Feynman path integral in quantum field theory. A classical BV formalism consists of $\bZ$-graded odd-symplectic manifold $\cM$ with action functional and a quantum BV formalism consists of a Berezinian measure compatible with the odd-symplectic structure and the quantum master equation with BV Laplacian. This leads to the notion of dGBV (differential Gerstenhaber-Batalin-Vilkovisky) algebra. A gauge-fixing in BV formalism is given by a choice of a Lagrangian submanifold of $\cM$. We refer to \cite[Chapter 4]{Mnev} for relevant basic notions and physical and mathematical significance of classical and quantum BV formalism.

\begin{defn} \cite[Section 4.8.1]{Mnev} \label{bv1}
Classical BV formalism $\operatorname{CBV}_S$ consists of the following data:
\begin{itemize}
\item a $\bZ$-graded supermanifold $\cF$ (the space of BV fields); \cite[Section 4.2]{Mnev}.
\item an odd symplectic structure, i.e. a differential 2-form $\o \in \O^2(\cF)^{-1}$ with associated Poisson bracket  $\{\cdot, \cdot\}$; \cite[Section 4.4]{Mnev}.
\item a BV action functional $S \in C^\infty(\cF)^0$ such that $\{S, S\}=0$ (where $C^\infty(\cF)$ means the smooth functions on $\cF$; \cite[Definition 4.2.1]{Mnev}.
\item a vector field $Q \in \cX(\cF)^1$ defined by $Q(\cdot)=\{S, \cdot\}$, which satisfies that $\iota_Q \omega = dS$; \cite[Definition 4.2.10]{Mnev}.
\end{itemize}
\end{defn}

\begin{defn} \cite[Section 4.8.2]{Mnev} \label{bv2}
Quantum BV formalism $\operatorname{QBV}_S$ for 0-dimensional field theory consists of the following data:
\begin{itemize}
\item a $\bZ$-graded supermanifold $\cF$ (the space of BV fields).
\item an odd symplectic structure, i.e. a differential 2-form $\o \in \O^2(\cF)^{-1}$ with associated Poisson bracket  $\{\cdot, \cdot\}$; \cite[Section 4.4.2]{Mnev}.
\item a Berezinian measure $\mu_0$ compatible with $\o$; \cite[Section 3.8]{Mnev}, \cite[Definition 4.4.10]{Mnev}.
\item An extended BV action functional $S=S^{(0)} -i \hbar S^{(1)} +(-i\hbar)^2 S^{(2)} +\cdots $ satisfying a quantum master equation
$$
\frac{1}{2}\{S,S\} -i \hbar \Delta_{\mu_0} S=0
$$
where $\Delta_{\mu}$ is the BV Laplacian associated to $\mu$; \cite[Section 4.4.3]{Mnev}

\end{itemize}
\end{defn}

\begin{defn}\label{bv3}
If $S^{(0)}$ in quantum BV formalism $\operatorname{QBV}_S$ satisfies $\{S^{(0)}, S^{(0)}\}=0$, we say that $\operatorname{QBV}_S$ is a BV quantization of classical BV formalism $\operatorname{CBV}_{S^{(0)}}$. It means that quantum BV formalism modulo $\hbar$ is classical BV formalism (taking a classical limit corresponds to the Planck constant $\hbar \to 0$).
\end{defn}

The space-time for $\QFT_S^\bC$ is (0+0)-dimensional, i.e. a finite set of points of cardinality $N=n+k+1$.
The space of fields is the space of functions on this space-time, i.e. the affine space $\bA^N$ of dimension $N$ with coordinates $\ud z$. We define the action functional for $\QFT_S^\bC$ as the function $S(\ud z)$ on $\bA^N$ in \eqref{dwf}.
The equation of motion space is given by solutions of the Euler-Lagrange equation for $\QFT_S^\bC$, i.e. the critical locus of $S$, $d (S (\ud z))=0$:
\begin{eqnarray}\label{criticallocus}
G_1(\ud x) = \cdots = G_k(\ud x) =0, \quad \frac{\partial}{\partial x_i} S(\ud z) =0, \quad i =0, 1, \cdots, n.
\end{eqnarray}
Notice that the weight zero part of the equation of motion space is given by $G_1(\ud x)=\cdots=G_k(\ud x)=0$, which are defining equations of $X=X_{\ud G}$ in $\BP^n$. We define the classical observables as the polynomial functions on the equation of motion space, i.e. elements of $A_{\BP(\cE)}/ Jac(S)$. The $\ch$ and $\wt$ can be understood as a gauge action of the abelian group $\bC^\times \times \bC^\times$ on the space of fields $\bA^N$; the action functional $S(\ud z)$ is homogeneous under the gauge action. If we introduce $\hbar$ such that $\ch(\hbar)=0$ and $\wt(\hbar)=1$, then $\frac{S(\ud z)}{\hbar}$ is invariant under the gauge action.

Let $\cF_\bC$ be a supermanifold whose algebraic structure sheaf is given by $\cA_{\rho}$ in \eqref{rhob}; $\cF_\bC$ is a (-1)-shifted cotangent bundle\footnote{The fiber coordinate has grading -1.} $T^*[-1](\bC^N)$.
We fix a Darboux coordinate $\ud \eta=(\eta_1, \cdots, \eta_N), \ud z=(z_1, \cdots, z_N)$ of $\cF$.
Then $\o=\sum_{i=1}^N d\eta_i \wedge d z_i$ defines an odd symplectic structure on $\cF$.
The associated Poisson structure is given by 
\be
\{ f, g\}= \sum_{i=1}^N \frac{\partial f}{\partial z_i} \frac{\partial g}{\partial \eta_i} - (-1)^{|f|+1} \frac{\partial f}{\partial \eta_i} \frac{\partial g}{\partial z_i}, \quad f, g \in \widetilde \cAb.
\ee
By definition, the Berezinian measure compatible with $\o$ is given by $\mu_0:= d^N \ud x D^N \ud \eta$ using the Darboux coordinate $\ud \eta=(\eta_1, \cdots, \eta_N), \ud z=(z_1, \cdots, z_n)$ of $\cF$. Then the BV Laplacian associated to $\mu_0$ is given by
\be
\Delta_{\mu_0}:=\sum_{i=1}^N\frac{\partial}{\partial z_i}\frac{\partial}{\partial\eta_i},
\ee
Moreover, the BV Laplacian associated to $\mu=e^{2S}\mu_0=e^{2 S} d^N \ud x D^N \ud \eta$ is given by (\cite[(4.4.6)]{Mnev})
\be
\Delta_{\mu}= \Delta_{\mu_0} + \{S, \cdot\}.
\ee

In \cite{KPP21}, it is shown that the period integral for $H^{n-k}_{\operatorname{prim}}(X,\bC)$ can be understood as (a form of) Feynman path integral of $\QFT_S^\bC$ by a careful analysis of the isomorphism $\Res_X \circ s^* \circ  \varphi_S$, where relevant definitions are given in \eqref{resx}, \eqref{secisom}, and  \eqref{gmtwo}. Roughly speaking,  the period integral $\int_\g \o$ for $[\o] \in H^{n-k}_{\operatorname{prim}}(X,\bC)$ and a fixed vanishing homology cycle $[\g]\in H_{n-k}(X,\bZ)$, can be understood as
\bea\label{fpi}
\int_\g \o = \int_{\bA^N} f_\o(\ud z) e^{S(\ud z)} d\mu_\g
\eea
for ``some measure'' $\mu_\g$ and some function\footnote{$[\o]=(\Res_X \circ s^* \circ \varphi_S)(f_\o)$.} $f_\o$ on $\bA^N$: we refer to \cite[the proof of Theorem 1.1]{KPP21} for its precise explanation. 
In other words, we can understand the period integral $\o \mapsto \int_{\g}\o$ as a BV integral (in the sense of \cite[Section 4.4.4]{Mnev}), a $\bC$-linear functional $f \mapsto \cC_\g^X(f)$ on $\cA_{\BP(\cE)}$ such that $\cC_\g \circ \Delta_{\mu}=0$;
the authors showed that the kernel of the map $f \mapsto \int_{\bA^N} f e^{S(\ud z)} d\mu_\g$
is $V_S=\Delta_{\mu}(\cA_{\rho}^{-1})$ in \cite[Proposition 3.7]{KPP21}.
This leads us to define the quantum observables as elements of $A_{\BP(\cE)}/ V_S=A_{\BP(\cE)}/ \Delta_{\mu}(\cA_{\rho}^{-1})$.
If we consider the integral $\int_{\bA^N} f_\o(\ud z) e^{i \frac{S(\ud z)}{\hbar}} d\mu_\g$,
then $S \in \cA_\rho^0=A_{\BP(\cE)}$ satisfies the quantum master equation 
$$
\frac{1}{2}\{S,S\} -i \hbar \Delta_{\mu_0} S=0;
$$
$T^*[-1](\bC^N)$ with $S=S^{(0)}, \o,$ and $\mu_0$ gives us quantum BV formalism in Definition \ref{bv2}.

The critical locus $dS=0$ in $\bA^N$ is not discrete as we saw in \eqref{criticallocus}.
On the other hand, if we restrict our space of fields to $U_N:=\bC^N-(\bC^k \times \{ 0\} \sqcup \{0\} \times \bC^{n+1})=(\bC^k-\{0\}) \times (\bC^{n+1}-\{0\})$, then the critical locus $dS=0$ becomes discrete\footnote{This removes the necessity of BV gauge-fixing to understand the period integrals (the Feynman path integrals  in $\QFT_S$; a main point for gauge-fixing is to modify the action functional so that it has isolated critical points. See \cite[p78, p97, p117]{Mnev} for details.} in $U_N$ due to the smoothness of $X$. Then $\bC^\times \times \bC^\times$ acts (``gauge action'') on $U_N$
\begin{eqnarray*}
(c,w) \cdot (y_1, \cdots, y_k, x_0, \cdots, x_n)= (w c^{-d_1}y_1, \cdots, w c^{-d_k}y_k, c x_0, \cdots, c x_n)
\end{eqnarray*}
for $(c,w)\in \bC^\times \times \bC^\times$. The projective bundle $\BP(\cE)$ appeared in \eqref{fa} can be realized as a geometric quotient $U_N/\bC^\times \times \bC^\times$ and $X_S$ defines a smooth hypersurface in $\BP(\cE)$.

\section{Formal deformation theory of cochain complexes with multiplication} \label{sec3}

\subsection{Homotopy Lie algebra and BV algebra} \label{sub3.1}

Roughly speaking, a homotopy Lie algebra ($L_\infty$-algebra) is  a ``differential Lie algebra up to homotopy''. We refer to section 13.2, \cite{LV12} for the precise theoretical definition (as an algebra over a particular algebraic operad $\operatorname{Lie}$) and its basic properties.
Here we only briefly review an explicit description following the appendix 5.2, \cite{PP}.

Let $k$ be a field of characteristic zero (we need this since we have to divide $m!$ in the definition of homotopy Lie algebras). 
 Let $\art_{k}$ denote the category of $\bZ$-graded artinian local $k$-algebras with residue field $k$ 
 and $\widehat{\art_k}$ be the category of complete $\bZ$-graded noetherian local $k$-algebras. 
For $\ma \in \hbox{Ob}(\art_{k})$, $\mm_\ma$ denotes the maximal ideal of $\ma$ which is a nilpotent $\Z$-graded super-commutative and associative $k$-algebra without unit. 
Let $V= \bigoplus_{i\in \bZ} V^i$ be a $\bZ$-graded vector space over $k$. If $x \in V^i$, we say that $x$ is a homogeneous element of degree $i$; let $|x|$ be the degree of a homogeneous element of $V$. For each $n \geq 1$ let $S(V)=\bigoplus_{n=0}^\infty S^n(V)$
be the free $\bZ$-graded super-commutative and associative algebra over $k$ generated by $V$, which is the quotient algebra of the free tensor algebra $T(V)=\bigoplus_{n=0}^\infty T^n(V)$ by the ideal generated by $x\otimes y - (-1)^{|x||y|}y\otimes x$. Here $T^0(V)=k$ and $T^n(V) = V^{\otimes n}$ for $n \geq 1$.

\begin{definition}[$L_{\infty}$-algebra]
\label{shl}
The triple $V_L=(V,\underline{\ell}, 1_{V})$ is a unital $L_\infty$-algebra over $k$  if
$1_V \in V^0$ and $\underline{\ell}=\ell_1,\ell_2,\cdots$ be a family of
$k$-linear maps such that

\begin{itemize}
\item $\ell_n \in \Hom(S^n (V),V)^1$ for all $n\geq 1$.
\item $\ell_n(v_1,\cdots, v_{n-1}, 1_{V})=0$,  for all $v_1,\cdots, v_{n-1} \in V$, $n\geq 1$. 
\item for any $\ma \in \hbox{Ob}\left(\art_{k}\right)$ and for all $n\geq 1$
\be
\sum_{k=1}^n\frac{1}{(n-k)! k!} {\ell}_{n-k+1}\left({\ell}_k\left( \g,\cdots,  \g\right), \g,\cdots,
 \g\right)=0,
\ee
whenever $\g \in (\mm_\ma\otimes V)^0$, where 
\be
&&{\ell}_n\big(a_1\otimes v_1, \cdots,  a_n\otimes v_n\big) \\
&&=(-1)^{|a_1|+|a_2|(1+|v_1|) +\cdots + |a_n|(1+|v_1|+\cdots +|v_{n-1}|)}
 a_1\cdots a_n\otimes \ell_n\left(v_1,\cdots, v_n\right).
\ee
\end{itemize}
\end{definition}


\begin{definition} [$L_{\infty}$-morphism]
\label{shlm}
A morphism of unital $L_\infty$-algebras from $V_L$ into $V'_L$ 
is a  family $\underline{\phi}=\phi_1,\phi_2,\cdots$ 
such that
\begin{itemize}
\item $\phi_n \in \Hom (S^n V, V')^0$ for all $n\geq 1$.
\item $\phi_1(1_{V})=1_{V'}$ and $\phi_n(v_1,\cdots, v_{n-1}, 1_{V})=0$, $v_1,\cdots, v_{n-1} \in V$, for all $n\geq 2$. 
\item for any $\ma \in \hbox{Ob}\left(\art_k\right)$ and for all $n\geq 1$
\be
&&\sum_{j_1+ j_2 =n}
\frac{1}{j_1! j_2!} {\phi}_{j_1+1}\left( \ell_{j_2}(\g,\cdots,\g), \g,\cdots, \g\right) \\
&&=
\sum_{j_1+\cdots + j_r =n}\frac{1}{r!}\frac{1}{j_1!\cdots j_r!}
 \ell'_r\left({\phi}_{j_1}(\g,\cdots,\g), \cdots, {\phi}_{j_r}(\g,\cdots,\g)\right),
\ee
whenever $\g \in (\mm_\ma\otimes V)^0$,
where
\be
&&{\phi}_n\big(a_1\otimes v_1, \cdots, a_n\otimes v_n\big)\\
&&=(-1)^{|a_2||v_1| +\cdots + |a_n|(|v_1|+\cdots +|v_{n-1}|)}
 a_1\cdots a_n\otimes \phi_n\left(v_1,\cdots,v_n\right).
\ee
\end{itemize}
\end{definition}
If we forget the unity $1_V$, then we call a pair $(V, \ud \ell)$ an $L_\infty$-algebra. We can similarly define an $L_\infty$-morphism without the condition on $1_V$.
One can define the composition of $L_\infty$-morphism and it can be checked that unital $L_{\infty}$-algebras over $k$ and $L_\infty$-morphisms form a category.

\bed
The cohomology $H$ of the $L_{\infty}$-algebra $(V, \underline \ell)$ is the cohomology of the underlying complex $(V, K=\ell_1)$. An $L_{\infty}$-morphism $\underline \phi$ is an $L_\infty$-quasi-isomorphism if $\phi_1$ induces an isomorphism on cohomology.
\eed

\bed
An $L_\infty$-algebra $(V, \ud \ell)$ is called a (shifted) dgla(differential graded Lie algebra)
if $\ell_m =0$ for $m\geq 3$.
This means that $(V, \ell_1)$ is a cochain complex ($\ell_1$ has degree 1), i.e. $\ell_1\circ \ell_1=0$ and $(V,\ell_2)$ is a graded Lie bracket ($\ell_2$ has also degree 1), i.e.
\be
\ell_2(x_1, x_2)&=&(-1)^{|x_1||x_2|}\ell_2(x_2, x_1) \\
0&=&\ell_2(\ell_2(x_1,x_2), x_3) + (-1)^{|x_1|} \ell_2(x_1, \ell_2(x_2, x_3))
+(-1)^{(|x_1|+1)|x_2|}\ell_2(x_2, \ell_2(x_1,x_3)),
\ee
and $\ell_1$ is a graded derivation of the Lie bracket
\be
\ell_1\left(\ell_2(x_1,x_2)\right) =- \ell_2(\ell_1(x_1), x_2) + (-1)^{|x_1|+1}\ell_2(x_1, \ell_1(x_2)).
\ee
\eed

In fact, any $L_\infty$-algebra can be strictified to give a dgla, i.e. any $L_\infty$-algebra is $L_\infty$-quasi-isomorphic to a dgla.
Now we give definitions of G-algebra, GBV algebra, and dGBV algebra, slight variations of a dgla, which are suitable for our analysis on the $p$-adic Dwork complex.

\begin{defn}\label{bvd}
Let $k$ be a field. Let $(\cC,\cdot)$ be a unital $\Z$-graded super-commutative and associative $k$-algebra.
Let $[\bullet, \bullet]: \cC \otimes \cC \to \cC$ be a bilinear map of degree 1.

(a) $(\cC, \cdot, [\cdot, \cdot])$ is called a G-algebra (Gerstenhaber algebra) over $k$ if
\begin{align*}
  \quad [a, b] &= (-1)^{|a||b|}[b,a],\\
  [a, [b,c]]&= (-1)^{|a|+1}[[a,b],c]+(-1)^{(|a|+1)(|b|+1)} [b, [a,c]],\\
 \quad [a,b \cdot c]&= [a, b] \cdot c +(-1)^{(|a|+1)\cdot |b|} b \cdot [a,c],
 \end{align*}
 for any homogeneous elements $a, b, c \in \cC.$

(b) $(\cC,\cdot, K,\ell_2^K)$ is called a GBV(Gerstenhaber-Batalin-Vilkovisky)-algebra\footnote{$(\cC,\cdot, K)$ is called a BV algebra, if $(\cC,\cdot, K,\ell_2^K)$ is a GBV algebra} over $k$ where
\begin{align}\label{elltwo}
\ell_2^{K}(a,b):= K(a \cdot b)-K(a)\cdot b -(-1)^{|a|} a\cdot K(b), \quad a,b \in \cC,
\end{align}
 if 
\text{$(\cC, K, \ell_2^K)$ is a (shifted) dgla and} 
\text{$(\cC, \cdot, \ell_2^K)$ is a G-algebra,}

%

 (c) $(\cC, \cdot, K, \ell_2^K, Q)$, where $Q:\cC \to \cC$ is a linear map of degree 1, is called a dGBV(differential Gerstenhaber-Batalin-Vilkovisky) algebra if $(\cC, \cdot, K,  \ell_2^K(\cdot, \cdot))$ is a GBV algebra
 and $(\cC,\cdot,Q)$ is a cdga(commutative differential graded algebra) with $KQ+QK=0$, i.e.
 $$
 Q^2=0, \quad Q(a \cdot b) = Q(a) \cdot b + (-1)^{|a|} a \cdot Q(b), \quad a,b \in \cC.
 $$
\end{defn}

\subsection{Formal deformation theory} \label{sub3.2}

In this subsection, we will study the deformations of the data $(\cC,\cdot,K,\Psi)$ where
\begin{quote}
(1) $(\cC,\cdot)$ is a $\mathbb{Z}$-graded super-commutative associative algebra algebra over $k$,\\
(2) $(\cC,K)$ is a cochain complex over $k$, and\\
(3) $\Psi:\cC\rightarrow\cC$ is a $k$-linear cochain map, i.e. $\Psi \circ K= K\circ \Psi$.
\end{quote}
Its deformation theory was studied in \cite[Subsection 3.4]{PP}. We briefly review it here.

\begin{definition}\label{partition}
A \emph{partition} $\pi= B_1 \cup B_2\cup \cdots$ of the set $[n]=\{1,2, \cdots, n\}$ is a decomposition of $[n]$ into a pairwise disjoint non-empty subsets $B_i$, called \emph{blocks}. Blocks are ordered by the minimum element of each block and each block is ordered by the ordering induced from the ordering of natural numbers. The notation $|\pi|$ means the number of blocks in a partition $\pi$ and $|B|$ means the size of the block $B$. If $k$ and $k'$ belong to the same block in $\pi$, then we use
the notation $k \sim_\pi k'$. Otherwise, we use $k \nsim_\pi k'$. Let $P(n)$ be the set of all partitions of $[n]$. 
\end{definition}

%

\bed \label{defdesc}
For a given $(\cC, \cdot, K)$, we define $(\cC, \underline \ell^K)$, where $\underline \ell^K=\ell_1^K, \ell_2^K, \cdots$ is the family of linear maps $\ell_n^K: S^n(\cC) \to \cC$, inductively defined by the formula: $\ell_1^{K}=K$ and
\be
&&\ell_n^{K}(x_1, \cdots, x_{n-1}, x_n)=  \ell_{n-1}^{K}(x_1,\cdots, x_{n-2}, x_{n-1}\cdot x_n)\\
&&
-\ell_{n-1}^{K}(x_1, \cdots, x_{n-1}) \cdot x_{n} 
-(-1)^{|x_{n-1}|(1+|x_1|+\cdots + |x_{n-2}|)}  x_{n-1}\cdot \ell_{n-1}^{K}(x_1, \cdots, x_{n-2}, x_n), \quad n \geq 2,
\ee
for any homogeneous elements $x_1, x_2, \cdots, x_n \in \cC$.

 For a given cochain map $f: (\cC, \cdot, K) \to (\cC', \cdot, K')$, we define $ \underline \phi^f=\phi_1^f, \phi_2^f, \cdots $ as a family of $k$-linear maps $\phi_n^f: S^n(\cC) \to \cC'$ defined inductively by the formula: $\phi_1^{f}= {f}$ and
 \be
\phi_m^{f}(x_1, \cdots, x_m) = \phi_{m-1}^{f} (x_1, \cdots, x_{m-2}, x_{m-1}\cdot x_m) -
\sum_{\substack{\pi \in P(m), |\pi|=2 \\ m-1 \nsim_\pi m}} \phi^{f} (x_{B_1})\cdot \phi^{f}(x_{B_2}), \quad m \geq 2,
\ee
 for any homogeneous elements $x_1, x_2, \cdots, x_m \in \cC$. 
  Here we use the following notation:
\be
x_B&=& x_{j_1} \otimes \cdots \otimes x_{j_{r}} \text{ if }  B=\{j_1, \cdots, j_{r}\},\\
\phi^f(x_{B})&=&\phi^f_r(x_{j_1}, \cdots, x_{j_r}) \text{ if } B=\{j_1, \cdots, j_r\}.
\ee
\eed

The following was proved in \cite[Subsection 3.2]{PP}.
\begin{proposition}
$(\cC, \ud \ell^K)$ is an $L_\infty$-algebra and $\ud \phi^f$ is an $L_\infty$-morphism from $(\cC, \ud \ell^K)$ to $(\cC', \ud \ell^{K'})$.
\end{proposition}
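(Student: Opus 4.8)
The plan is to verify the two claims by direct but structured computation, exploiting the recursive nature of the defining formulas in Definition~\ref{defdesc}. The key observation is that the operations $\ell_n^K$ and $\phi_n^f$ are built to be the \emph{obstructions} to $K$ (respectively $f$) being a derivation (respectively a homomorphism) with respect to $\cdot$, so the $L_\infty$-relations should hold essentially by construction.

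\textbf{First part: $(\cC,\ud\ell^K)$ is an $L_\infty$-algebra.} I would first reinterpret $\ell_n^K$ in closed form. Writing out the recursion, one finds that $\ell_n^K(x_1,\dots,x_n)$ is an alternating-sum expression of the shape $\sum_{S\subseteq[n]} \pm\, \big(\prod_{i\notin S} x_i\big)\cdot K\big(\prod_{i\in S}x_i\big)$ with appropriate Koszul signs, where only $S$ containing $n$ (and, inductively, built compatibly with the bracketing) survive — in other words $\ell_n^K$ measures the failure of $K$ to be a multiderivation. The cleanest route is to encode $(\cC,\cdot,K)$ as a BV-type package: set $\widehat K = $ the operator on $S(\cC)$ whose components reproduce $\ell_\bullet^K$, and show $\widehat K^2=0$ is equivalent to $K^2=0$ together with $\cdot$ being associative and commutative. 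Concretely, I would (i) prove by induction on $n$ a ``Leibniz across one slot'' identity expressing $\ell_n^K(\dots,x_{n-1}\cdot x_n)$ in terms of lower $\ell^K$'s and products — this is exactly the recursion — then (ii) feed this into the generalized Jacobi identity $\sum_{k=1}^n \tfrac{1}{(n-k)!k!}\ell_{n-k+1}^K(\ell_k^K(\g,\dots,\g),\g,\dots,\g)=0$ and check it collapses using only $K^2=0$ and super-commutative associativity. Since the statement is quoted from \cite[Subsection 3.2]{PP}, I would present this as a guided induction rather than reproving everything from scratch, emphasizing the base cases $n=1,2,3$ (where $n=2$ gives $K^2=0$ on products via $\ell_1^K\ell_2^K+\ell_2^K(\ell_1^K\otimes\id+\id\otimes\ell_1^K)$-type cancellation, and $n=3$ gives the homotopy Jacobi identity for $\ell_2^K$, i.e.\ the G-algebra bracket axiom).

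\textbf{Second part: $\ud\phi^f$ is an $L_\infty$-morphism from $(\cC,\ud\ell^K)$ to $(\cC',\ud\ell^{K'})$.} The strategy mirrors the first part. I would first establish, by induction on $m$, a closed form for $\phi_m^f$ as a signed sum over \emph{ordered set partitions} of $[m]$ of products $\prod_j f(\text{product over block }B_j)$ of the $x_i$'s, reflecting that $\phi^f$ measures the failure of $f$ to be an algebra homomorphism. Then I would plug the $\phi_\bullet^f$ and $\ell_\bullet^K,\ell_\bullet^{K'}$ into the $L_\infty$-morphism identity
\[
\sum_{j_1+j_2=n}\tfrac{1}{j_1!j_2!}\phi_{j_1+1}^f\big(\ell_{j_2}^K(\g,\dots,\g),\g,\dots,\g\big)=\sum_{j_1+\dots+j_r=n}\tfrac{1}{r!\,j_1!\cdots j_r!}\ell_r^{K'}\big(\phi_{j_1}^f(\g,\dots,\g),\dots,\phi_{j_r}^f(\g,\dots,\g)\big),
\]
and verify equality by reorganizing both sides as sums over set partitions of $[n]$, matching terms block-by-block. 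The input facts are: $f$ is a cochain map ($fK=K'f$), $\cdot$ is associative/commutative on both sides, and the recursions defining $\ell^{K'}$ and $\phi^f$. For $m=1$ this is just $fK=K'f$; for $m=2$ it unwinds to the compatibility of $\phi_2^f$ with $\ell_2^K,\ell_2^{K'}$ and $f$, which follows directly from the $m=2$ case of the $\phi^f$-recursion.

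\textbf{Main obstacle.} The genuine difficulty is purely combinatorial/bookkeeping: tracking Koszul signs through the reindexing that converts the ``iterated binary'' recursions into symmetric sums over (ordered) set partitions, and checking that the factorial normalizations $\tfrac{1}{(n-k)!k!}$ and $\tfrac{1}{r!j_1!\cdots j_r!}$ match the multiplicities with which each partition-type term appears. I expect the sign verification in the $L_\infty$-morphism identity — where one must reconcile the sign convention in the recursion for $\phi_m^f$ (which has no explicit Koszul sign on the partition sum, the signs being hidden in the $\phi^f(x_B)$ notation and the product) with the $\ell_r^{K'}$ signs — to be the most error-prone step; I would handle it by first doing the ungraded (purely even) case to pin down the combinatorics, then inserting signs via the standard $S(\cC)$ coalgebra/Koszul-sign formalism, which is the right framework to make the induction go through cleanly. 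Since the result is already in the literature (\cite{PP}), I would keep the write-up at the level of indicating the induction and the partition-sum reorganization, citing \cite[Subsection 3.2]{PP} for the full sign-chase.
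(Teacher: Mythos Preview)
Your proposal is correct in substance, but it overshoots what the paper actually does: the paper gives no argument at all for this proposition and simply records that it ``was proved in \cite[Subsection 3.2]{PP}.'' Your outlined induction on $n$ (using the closed-form of $\ell_n^K$ as an alternating sum of $K$ applied to sub-products, and of $\phi_m^f$ as a sum over partitions measuring the failure of $f$ to be multiplicative) is indeed the method used in \cite{PP}, so your sketch is faithful to the cited source; you are just supplying details the present paper chose to omit. Since you already plan to cite \cite[Subsection 3.2]{PP} for the full sign-chase, the most economical fix is to replace the sketch with that single citation, matching the paper.
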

We call the above $L_\infty$-algebras and $L_\infty$-morphisms as descendant $L_\infty$-algebras and descendant $L_\infty$-morphisms respectively.

\begin{rem}
The construction of $(\cC, \ud \ell^K)$ can be viewed as a generalization of the BV bracket \eqref{elltwo} for a differential operator $L$ of order $\leq 2$ to a differential operator $K$ of any finite order. Note that the $L_\infty$-algebra $(\cC, K, 0, 0, \cdots )$ is isomorphic to $(\cC, \ud \ell^K)=(\cC, \ell_1^K=K, \ell_2^K, \ell_3^K, \cdots ) $ as $L_\infty$-algebras.

\vspace{1em}
For $\mathfrak{a}\in\art_k$ denote $\mathfrak{m_a}$ its maximal ideal. In what follows we endow $\mathfrak{a}\otimes\cC$ the natural $\mathbb{Z}$-grading.

According to \cite[Lemma 3.1]{PP}, if $\Gamma\in(\mathfrak{m_a}\otimes\cC)^0$ satisfies the Maurer-Cartan equation:
\begin{align*}
K(e^\Gamma-1)=0\iff \sum_{n\geq1}\frac{1}{n!}\ell^K_n(\Gamma,\cdots,\Gamma)=0
\end{align*}
then 
\begin{align}\label{kg}
K_\G:=e^{-\Gamma}\circ K\circ e^\Gamma
\end{align}
becomes a differential on $\mathfrak{a}\otimes\cC$, i.e. $(\mathfrak{a}\otimes\cC, K_\Gamma)$ is again a cochain map, which is a (formal) deformation of $(\cC,K)$ by the Maurer-Cartan solution $\Gamma$. 

Now we deform a cochain map $f:(\cC,\cdot, K)\to (\cC',\cdot, K')$. If we assume that $K(e^\Gamma-1)=0$ for some $\Gamma\in(\mathfrak{m_a}\otimes\cC)^0$, and $K'(e^{\Gamma'}-1)=0$ for some $\Gamma'\in(\mathfrak{m_a}\otimes\cC')^0$, then
\begin{align*}
\xymatrix{f_{\Gamma',\Gamma}:=e^{-\Gamma'}\circ f \circ e^\Gamma:\mathfrak{a}\otimes\cC \ar[r] & \mathfrak{a}\otimes\cC'},
\end{align*}
is clearly a cochain map from $(\mathfrak{a}\otimes \cC, K_\Gamma)$ to $(\mathfrak{a}\otimes \cC', K'_{\Gamma'})$.
In particular, we can deform a cochain endomorphism $\Psi:(\cC,K)\rightarrow(\cC,K)$ using a Maurer-Cartan solution $\Gamma$ and $\Psi_\G:=\Psi_{\G,\G}$. 

Unfortunately, this formal deformation is not suitable for a $p$-adic deformation of a cochain endomorphism of the $p$-adic Banach algebra $(\widetilde{A(b)}, \cdot, \tilde \Delta)$ in (\ref{pb}) and (\ref{pbd}).\footnote{We will explain this further later in subsection \ref{sub3.3}.}
Thus, in the next subsection, we study a slightly enhanced deformation theory.

\end{rem}

\subsection{Deformation theory for the $q$-power map}\label{sub3.3}

For a $p$-adic deformation of the $p$-adic Banach algebra $(\widetilde{A(b)}, \cdot, \tilde \Delta)$, we consider a graded $k$-algebra endomorphism\footnote{In our application to the zeta function of $X$, $\mathfrak{a}$ will be a formal power series ring with $k$ number of variables $t_1, \cdots, t_k$ and $\sigma$ will be a $q$-power map of those variables.} $\sigma:\mathfrak{a}\rightarrow\mathfrak{a}$. Since $\sigma$ is an algebra map, we have $\sigma(\mathfrak{m_a})\subseteq\mathfrak{m_a}$ by the nilpotency of $\mathfrak{m_a}$, and
\begin{align*}
Ke^{\sigma\Gamma}=K(\sigma e^\Gamma)=\sigma(Ke^\Gamma)
\end{align*}
so the triple $(\mathfrak{a}\otimes\cC,\cdot,K_{\sigma\Gamma})$, where $K_{\sigma\Gamma}=e^{-\sigma\Gamma} \circ K\circ e^{\sigma\Gamma}$, is also a cochain complex whenever $Ke^\Gamma=0$. 
Let $\Psi$ be a cochain endomorphism of $(\cC, K)$. Now define
\begin{align} \label{psg}
\xymatrix{\Psi_{\sigma,\Gamma}:=e^{-\sigma\Gamma}\circ\Psi\circ e^\Gamma:\mathfrak{a}\otimes\cC \ar[r] & \mathfrak{a}\otimes\cC}
\end{align}
Then
\begin{align*}
\Psi_{\sigma,\Gamma}\circ K_\Gamma&=e^{-\sigma\Gamma}\circ\Psi\circ e^\Gamma\circ e^{-\Gamma}\circ K\circ e^\Gamma\\
&=e^{-\sigma\Gamma}\circ\Psi\circ K\circ e^\Gamma\\
&=e^{-\sigma\Gamma}\circ K\circ\Psi\circ e^\Gamma\\
&=e^{-\sigma\Gamma}\circ K\circ e^{\sigma\Gamma}\circ e^{-\sigma\Gamma}\circ\Psi\circ e^\Gamma\\
&=K_{\sigma\Gamma}\circ\Psi_{\sigma,\Gamma}
\end{align*}
shows that
\begin{align*}
\xymatrix{\Psi_{\sigma,\Gamma}:(\mathfrak{a}\otimes\cC,\cdot,K_\Gamma) \ar[r] & (\mathfrak{a}\otimes\cC,\cdot,K_{\sigma\Gamma})}
\end{align*}
defines a cochain map. Since $K_\Gamma\neq K_{\sigma\Gamma}$ in general, $\Psi_{\sigma,\Gamma}$ is not an endomorphism in general. However, multiplication by $e^x$ has inverse $e^{-x}$ so via the commutative diagram
\begin{align*}
\xymatrix{
H^\bullet(\mathfrak{a}\otimes\cC,K_\Gamma) \ar[d]^-\wr_-{e^\Gamma} \ar[r]^-{\Psi_{\sigma,\Gamma}} & H^\bullet(\mathfrak{a}\otimes\cC,K_{\sigma\Gamma}) \ar[d]^-\wr_-{e^{\sigma\Gamma}} \\
\mathfrak{a}\otimes H^\bullet(\cC,K) \ar[r]_-\Psi & \mathfrak{a}\otimes H^\bullet(\cC,K)
}
\end{align*}
We may regard $\Psi_{\sigma,\Gamma}$ as an endomorphism on the cohomology space. Moreover, this diagram also shows that $\Psi_{\sigma,\Gamma}$ on the cohomology space depends only on $\Psi$ on the cohomology space.

\section{$p$-adic quantum BV formalism for $X$}\label{sec4}

Now we develop the main construction over $p$-adic fields.
We start from classical BV formalism over a finite field $\bF_q$.

\subsection{Classical BV formalism over $\bF_q$}\label{sub4.0}
Now we come back to a smooth projective complete intersection variety $X$ defined over $\bF_q$. The subsection \ref{sub2.1} suggests to consider the dual Chevalley-Eilenberg complex defined over $\bF_q$.
Let $N=n+k+1$ and 
$$
A := {\bF_q}[z_\m]_{\m=1,\cdots, N}=\bF_q [\ud z]
$$ 
where we recall $z_1=y_1, \cdots, z_k=y_k$ and $z_{k+1}=x_0, \cdots, z_N=x_{n}$.
We consider the Dwork potential
$$
S(\ud z) := \sum_{\ell=1}^k y_\ell \cdot G_{\ell}(\underline{x})
 \in \bF_q[\ud z].
$$

%
The same procedure as the subsection \ref{sub2.1} provides us a $\Z$-graded super-commutative algebra $\cA$ with differential $Q_{S}$:

\begin{align}\label{finite}
\begin{split}
\cA&=\cA^\bullet= {\bF_q}[\ud z][\ud \eta]={\bF_q}[z_1, z_2, \cdots, z_N][\eta_{1},\eta_2, \cdots, \eta_N], \\
Q_S&:=\sum_{i=1}^N \prt{S(\ud z)}{z_i} \pa{\eta_i}:\cA \to \cA.
\end{split}
\end{align}
Then we have
$
H^0(\cA, Q_S) \xrightarrow{\sim} A/Jac(S)
$
where $Jac(S)$ is the Jacobian ideal of $S$. We obtain the following proposition.

\begin{prop}
$(\cA, \cdot, Q_S)$ is a cdga (commutative differential graded algebra) over $\bF_q$, which provides classical BV formalism with action functional $S$, odd symplectic form $\o=\sum_{i=1}^N d\eta_i \wedge d z_i$, and the associated Poisson bracket is given by 
\be
\{ f, g\}:= \sum_{i=1}^N \frac{\partial f}{\partial z_i} \frac{\partial g}{\partial \eta_i} - (-1)^{|f|+1} \frac{\partial f}{\partial \eta_i} \frac{\partial g}{\partial z_i}, \quad f, g \in \cA.
\ee
\end{prop}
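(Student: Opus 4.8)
The plan is to prove the statement by a sequence of direct verifications, in two groups: first the cdga axioms for $(\cA,\cdot,Q_S)$, then the data required for classical BV formalism as listed in Definition~\ref{bv1}. Recall from \eqref{finite} that $\cA$ is the polynomial algebra $\bF_q[\ud z]$ in the commuting variables $z_1,\dots,z_N$ (each of degree $0$), extended by anticommuting generators $\eta_1,\dots,\eta_N$ (each of degree $-1$), graded by $\eta$-degree, so that $Q_S=\sum_{i=1}^N\left(\prt{S(\ud z)}{z_i}\right)\pa{\eta_i}$ raises the degree by $1$.

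For the cdga part, I would first note that each $\pa{\eta_i}$ is an odd derivation of $(\cA,\cdot)$ and that left multiplication by the even element $\prt{S}{z_i}$ carries odd derivations to odd derivations; summing over $i$ shows that $Q_S$ is a degree-$1$ derivation, i.e.\ $Q_S(a\cdot b)=Q_S(a)\cdot b+(-1)^{|a|}a\cdot Q_S(b)$. For $Q_S^2=0$ I would expand $Q_S^2=\sum_{i,j}\left(\prt{S}{z_i}\right)\left(\prt{S}{z_j}\right)\pa{\eta_i}\pa{\eta_j}$ and observe that the coefficient is symmetric in $(i,j)$ while $\pa{\eta_i}\pa{\eta_j}$ is antisymmetric, so the double sum cancels in pairs; hence $(\cA,\cdot,Q_S)$ is a cdga over $\bF_q$.

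For the BV part, I would take $\cF$ to be the $\bZ$-graded supermanifold with structure sheaf $\cA$, namely the $(-1)$-shifted cotangent bundle $T^*[-1]\bA^N_{\bF_q}$ with Darboux coordinates $(\ud z,\ud\eta)$. Then $\omega=\sum_i d\eta_i\wedge dz_i$ has total degree $-1$, is closed (constant coefficients) and is nondegenerate since it pairs $\eta_i$ with $z_i$, so it is an odd symplectic form; applying the standard recipe that turns an odd symplectic form into a degree-$1$ Poisson bracket --- the biderivation determined by $\{z_i,\eta_j\}=\delta_{ij}$, $\{z_i,z_j\}=\{\eta_i,\eta_j\}=0$ --- and evaluating on monomials reproduces the displayed formula for $\{f,g\}$. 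Since $S=\sum_\ell y_\ell G_\ell(\ud x)$ involves no $\eta$ variables, $\prt{S}{\eta_i}=0$, whence $\{S,S\}=0$ holds trivially and the Hamiltonian vector field $Q(\cdot):=\{S,\cdot\}$ equals $\sum_i\left(\prt{S}{z_i}\right)\pa{\eta_i}=Q_S$. Finally, a one-line contraction using $\iota_{Q_S}d\eta_i=Q_S(\eta_i)=\prt{S}{z_i}$ and $\iota_{Q_S}dz_i=Q_S(z_i)=0$ gives $\iota_{Q_S}\omega=\sum_i\left(\prt{S}{z_i}\right)dz_i=dS$, which supplies all the data of Definition~\ref{bv1}.

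The only place requiring care is the graded sign bookkeeping --- pinning down the Koszul signs when extracting the bracket formula from $\omega$ and when computing the contraction $\iota_{Q_S}\omega$ --- which I would handle by reducing everything to the elementary brackets and contractions of the coordinate functions, where the signs are forced. I would also note that passing from $\bC$ to $\bF_q$ changes nothing here, since Definition~\ref{bv1} involves no division by integers, in contrast to the quantum BV Laplacian and the $L_\infty$/dgla structures used later; this is precisely why only the \emph{classical} BV formalism is asserted over $\bF_q$ at this stage.
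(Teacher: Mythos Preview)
Your proposal is correct and follows the same approach as the paper: identify $\cF$ with $T^*[-1]\bA^N_{\bF_q}$ via its structure sheaf $\cA$ and observe that the data of Definition~\ref{bv1} are purely algebraic, hence make sense over $\bF_q$. The paper's proof is in fact much terser than yours---it essentially just records the identification $\cM_q=T^*[-1]\bF_q^N$ and asserts that Definition~\ref{bv1} carries over---so your explicit verifications of the cdga axioms, the bracket formula, $\{S,S\}=0$, and $\iota_{Q_S}\omega=dS$ are more than is strictly required.
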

\begin{proof}
The definition \ref{bv1} makes sense over $\bF_q$, since the definition is algebraic. The $\bZ$-graded algebra $\cA$ is a global section of an algebraic structure sheaf for the space $\cM_q=T^*[-1] \bF_q^N$ of BV fields, which is the (-1)-shifted cotangent bundle of the affine space $\bF_q^N$.
\end{proof}

\subsection{Key question for $p$-adic theory} \label{sub4.1}
The main question is whether one can find a 
$p$-adic lift of $(\cA, \cdot, Q_S)$, which has a $p$-adic complete continuous endomorphism $\Psi_S$ as cochain map such that the characteristic polynomial of $\Psi_S$ is equal to the zeta function $P(q^k T)$ in (\ref{zpt}). More precisely, can we find a $\bZ$-graded $p$-adic Banach algebra $\tilde \cA$ with the differential $\tilde K_S$ and a filtration $\{F^i \tilde \cA \}_{i\in \bZ}$ such that
there is a $\bF_q$-module isomorphism
\begin{eqnarray*}
\cR: (F^0 \tilde \cA/ F^1 \tilde \cA, \tilde K_S) \xrightarrow{\sim} (\cA, Q_S)
\end{eqnarray*}
and there is a cochain endomorphism $\Psi_S$ of $(\tilde \cA, \tilde K_S)$
which is $p$-adic completely continuous and whose characteristic polynomial is equal to $P(q^k T)$?
This question was essentially answered by Dwork and his successors (notably, Adolphson and Sperber). There are two technical difficulties for achieving this.
As is well-known, the differential operators like $\pa{z_i}$ behave badly in characteristic $p$. Even in the $p$-adic case, the differential operators behaves differently from the complex analytic case: the Poincare lemma fails if one considers $p$-adic analytic functions on the closed unit disc.
Another difficulty is the $p$-adic convergence problem of the exponential function. In the complex analytic case, the radius of convergence of the exponential function is infinity, but in the $p$-adic case, the radius of convergence is $p^{-\frac{1}{p-1}} < 1$. All of these difficulties were resolved by Dwork by introducing an overconvergent module and the splitting function.
Here we follow the version of Adolphson and Sperber, \cite{AdSp08}.

Our academic contribution is to change the product structure on the $p$-adic twisted de-Rham complex in \cite{AdSp08} (by using the $p$-adic dual Chevalley-Eilenberg complex) in order to reveal the BV structure, which put us in a natural framework of 0-dimensional quantum field theory and modern deformation theory.\footnote{There is a motto that every deformation problem in characteristic zero can be controlled by the Maurer-Cartan equation of some homotopy Lie algebra.}

\subsection{Basic notions}\label{sub4.2}
Let $\zeta_p$ be a primitive $p$-th root of unity in $\bC_p$, 
where $\bC_p$ is the $p$-adic completion of the algebraic closure of $\bQ_p$. 
We fix a $p$-adic absolute value $|\cdot|_p$ and a $p$-adic valuation $\val_p$ on $\bC_p$ such that $\val_p(p)=1$ and $|x|_p = p^{-\val_p(x)}$.

For the $p$-adic overconvergent module, we fix a rational number $b \in \bQ$ such that 
\begin{align}\label{bcond}
\frac{1}{p-1} < b< \frac{p}{p-1}
\end{align}
 and choose $M \in \bN$ such that 
$\frac{Mb}{(p-1)p}, \frac{M}{p-1} \in \bZ$.\footnote{These technical conditions on $b$ and $M$ are used to prove the statement (b) of Theorem \ref{ftheorem}.} 
Then we choose $\pi \in \bC_p$ such that
\begin{align}\label{pisol}
\pi^M = p.
\end{align}
Denote $\mathbb{Q}_q$ be the fraction field of $\bZ_q=W(\mathbb{F}_q)$, the ring of Witt vectors of $\mathbb{F}_q$. Let 
\bea\label{kk}
\Bbbk = \bQ_q(\zeta_p, \pi)
\eea
 be the smallest subfield of $\bC_p$ containing $\zeta_p, \pi$, and $\mathbb{Q}_q$. Denote by $\cO_\Bbbk$ the ring of integers of $\Bbbk$.


For a given polynomial $F(\ud z) \in \bF_q[\ud z]$, denote its Teichm\"uller lifting by $\tilde F(\ud z)$.
 In other words, if we write $F(\ud z) = \sum_{\ud w \in \bNn^N} f_{\ud w} \ud z^{\ud w}$, then
we  have $\tilde F(\ud z) = \sum_{\ud w \in \bNn^N} \tilde f_{\ud w} \ud z^{\ud w} \in \bZ_q[\ud z]$ where 
$(\tilde f_{\ud w})^q = \tilde f_{\ud w}$ 
and $\tilde f_{\ud w} \equiv f_{\ud w} (\modulo p)$.

We review Dwork's splitting function. 
Let $\g \in \bQ_q(\zeta_p) \subset \Bbbk$ be a solution of $\sum_{n=0}^\infty \frac{t^{p^n}}{p^n} =0$ such that $\val_p(\g)=\frac{1}{p-1}$. Following \cite{AdSp08}, we consider 
$$
\hat{\theta}(t) := \prod_{i=0}^{\infty} \theta(t^{p^i}) \quad \text{where} \quad  \theta(t) := E(\g t) = \sum_{i=0}^\infty \l_i t^i \in \Bbbk[[t]],
$$
where $E(t)$ is the Artin-Hasse exponential series
$$
E(t) = \exp \left(\sum_{n=0}^\infty \frac{t^{p^n}}{p^n} \right) \in \Bbbk[[ t ]].
$$
One can easily check that 
\bea \label{thetah}
\hat\theta (t) = \exp \left( \sum_{\ell =0}^\infty \g_\ell t^{p^\ell}   \right) \quad \text{where} \quad \g_\ell:=\sum_{i=0}^\ell \frac{\g^{p^i}}{p^i}.
\eea
Note that 
\begin{align}\label{estimate}
\val (\g_\ell) \geq \frac{p^{\ell+1}}{p-1}-(\ell+1).
\end{align}
For $a \in \bR_{\geq 0}$, we use the following notations
$$
D(a) = \{x \in \bC_p : |x|_p \leq a \}, \quad D(a^-)=\{x \in \bC_p : |x|_p < a \}.
$$
Then $\theta(t)$ converges on $D(p^{\frac{1}{p-1} -})$ (since $\val_p(\l_i) \geq \frac{i}{p-1}$) and $E(t)$ converges only on $D(1^-)$. 
Here $\theta(t)$ is called the Dwork splitting function because of the following lemma of Dwork.
\begin{lemma} \label{chr}
If we define a function $\psi_q:\bF_q \to \bC_p^*$ by the formula
$$
\psi_q(x) := \theta(\tilde x)\cdot  \theta(\tilde x^p) \cdots \theta(\tilde x^{p^{a-1}})
$$
for $x \in \bF_q$ and the Teichm\"uller representative $\tilde x\in \bZ_q$ of $x$, then $\psi_q$ is an additive character of $\bF_q$.
\end{lemma}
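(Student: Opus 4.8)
\emph{Overview.} This is Dwork's classical lemma, and the plan is to prove it in two stages: first that $\psi_q$ takes values in the group $\mu_p$ of $p$-th roots of unity in $\bC_p$, and then that it is multiplicative, by an approximation argument modulo a small power of the maximal ideal. The only inputs are the Artin--Hasse identity $E(t)^p=\exp(pt)\,E(t^p)$ (equivalently, $p$-integrality of the coefficients of $E$); the defining relation $\sum_{n\ge0}\gamma^{p^n}/p^n=0$, which yields $\gamma_\ell=-\sum_{n>\ell}\gamma^{p^n}/p^n$, hence $\gamma_\ell\to0$, together with the estimate \eqref{estimate}; the facts $\tilde x^{p^a}=\tilde x$ and $\widetilde{x+y}\equiv\tilde x+\tilde y\pmod p$ for Teichm\"uller lifts and the additivity of the $p$-power map on $\bF_q$; and the elementary fact that a nontrivial $\zeta\in\mu_p$ satisfies $\val_p(\zeta-1)=\tfrac1{p-1}$.

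\emph{Step 1: $\psi_q$ is $\mu_p$-valued.} I would compute $\psi_q(x)^p$ directly. Writing $\theta(t)=E(\gamma t)$ and using the Artin--Hasse identity, $\theta(\tilde x^{p^i})^p=\exp(p\gamma\tilde x^{p^i})\,E(\gamma^p\tilde x^{p^{i+1}})$; since $\val_p(\gamma^p\tilde x^{p^{i+1}})=\tfrac p{p-1}>\tfrac1{p-1}$, the remaining factor may itself be rewritten as $E(\gamma^p\tilde x^{p^{i+1}})=\exp\!\big(\sum_{n\ge0}\gamma^{p^{n+1}}\tilde x^{p^{i+1+n}}/p^n\big)$. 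Multiplying over $i=0,\dots,a-1$ and collecting exponents gives $\psi_q(x)^p=\exp(pC)$, where $C$ is a convergent series in the powers $\tilde x^{p^\ell}$ with coefficients built from the $\gamma^{p^n}/p^n$; after reducing the exponents modulo $a$ via $\tilde x^{p^a}=\tilde x$, these coefficients become telescoping sums which, by $\gamma_\ell\to0$, all vanish, so $C=0$ and $\psi_q(x)^p=1$. The estimate \eqref{estimate} makes every series absolutely convergent, which is what licenses the rearrangements and the passage from a product of exponentials to a single one.

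\emph{Step 2: $\psi_q$ is a homomorphism.} By Step 1, $\Lambda:=\psi_q(x+y)\,\psi_q(x)^{-1}\psi_q(y)^{-1}\in\mu_p$ for all $x,y$. Since $\theta(t)=1+\gamma t+r(t)$ with $\val_p(r(t))\ge\tfrac2{p-1}$ whenever $|t|_p\le1$, expanding the defining product gives $\psi_q(z)\equiv1+\gamma\sum_{i=0}^{a-1}\tilde z^{p^i}$ modulo elements of valuation $\ge\tfrac2{p-1}$, hence $\Lambda\equiv1+\gamma\sum_{i=0}^{a-1}\big(\widetilde{x+y}^{p^i}-\tilde x^{p^i}-\tilde y^{p^i}\big)$ modulo the same. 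Additivity of the $p$-power map on $\bF_q$ makes every bracket divisible by $p$, so the correction term has valuation $\ge1+\tfrac1{p-1}=\tfrac p{p-1}$, which exceeds $\tfrac2{p-1}$ exactly because $p$ is odd; therefore $\val_p(\Lambda-1)\ge\tfrac2{p-1}>\tfrac1{p-1}$, which forces $\Lambda=1$. Together with $\psi_q(0)=\theta(0)^a=1$, this shows $\psi_q\colon(\bF_q,+)\to(\bC_p^{*},\times)$ is a group homomorphism.

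\emph{Main obstacle.} The difficulty is $p$-adic-analytic, not algebraic: the $p$-adic exponential converges only on $\{\val_p>\tfrac1{p-1}\}$, so at each stage one must check that the arguments of the exponentials stay in this disc and that the infinite rearrangements in Step 1 are legitimate --- this is precisely why $\theta$ is built from the Artin--Hasse series and why the refined bound \eqref{estimate}, rather than the bare value $\val_p(\gamma)=\tfrac1{p-1}$, is the relevant one. Once the valuations are under control, the rest reduces to formal manipulation of the $\gamma^{p^n}$ and routine facts about Teichm\"uller lifts.
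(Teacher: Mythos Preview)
Your argument is correct. The paper does not supply its own proof of this lemma: it simply attributes the statement to Dwork and moves on, so there is nothing to compare against. Your two-step approach --- first showing $\psi_q(x)^p=1$ by combining the Artin--Hasse functional equation with the defining relation $\sum_{n\ge0}\gamma^{p^n}/p^n=0$, then forcing the multiplicativity defect in $\mu_p$ to be trivial by a valuation estimate --- is the standard proof one finds in the literature on Dwork's method.

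Two small remarks on presentation. In Step~1, what you call ``telescoping'' is really just the direct computation that, after reducing exponents modulo $a$, the coefficient of each $\tilde x^{p^j}$ in $C$ equals $\sum_{m\ge0}\gamma^{p^m}/p^m$, which vanishes by the defining equation for $\gamma$; invoking $\gamma_\ell\to0$ is equivalent but slightly roundabout. In Step~2, the phrase ``additivity of the $p$-power map on $\bF_q$'' is doing the work of the identity $\widetilde{x+y}^{\,p^i}=\widetilde{x^{p^i}+y^{p^i}}$, which then reduces the claim to the basic congruence $\widetilde{u+v}\equiv\tilde u+\tilde v\pmod p$; you might spell this out. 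Your use of the hypothesis that $p$ is odd, to get the strict inequality $\tfrac{p}{p-1}>\tfrac{2}{p-1}$, is exactly right and matches the paper's standing assumption.
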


\subsection{GBV algebra associated to $\BP^n$}\label{sub4.3}

 We are interested in a $p$-adic homotopy Lie algebra and the zeta function (Theorem \ref{ftheorem}) of a smooth projective complete intersection variety in $\BP^n$ over $\bF_q$ defined by $G_1(\ud x), \cdots, G_k(\ud x)$. In order to study them, we start from $\BP^n$.

For $ \ud u = (u_1,... ,u_{m}) \in \bR^m$, we put $|\ud u |  = u_1 +\cdots+u_{m}$. Let $\bNn$ be the set of non-negative integers.
For each $b$ satisfying \eqref{bcond}, the $p$-adic Banach algebra $\widetilde {A(b)}$ is given as follows:
\begin{eqnarray}\label{pb}
\widetilde {A(b)} = \left\{ \xi(\ud z)=\sum_{ (\ud u, \ud v) \in \bNn^{N}}  a_{ \ud u, \ud v} \pi^{Mb |\ud v|}  \ud x^{\ud u}\ud y^{\ud v} : a_{\ud u, \ud v} \in  \Bbbk \text{ and } a_{\ud u, \ud v} \to 0 \text{ as } |(\ud u, \ud v)| \to \infty  \right\}
\end{eqnarray}
where  $\ud y^{\ud v}= (y_1^{v_1}, \cdots, y_k^{v_k})$ and $\ud x^{\ud u} = (x_0^{u_0}, \cdots, x_n^{u_n})$.
Here the overconvergent factor $\pi^{Mb |\ud v|}$ is specifically designed (in \cite{AdSp08}) to prove the Katz conjecture: $T_q \left( F^i\widetilde {A(b)} \right) \subseteq F^i \widetilde {A(bq)}$ where $T_q$ is given in \eqref{DPSI} and the filtration $F^i$ is given in \eqref{fabs}. The $p$-adic Banach structure on $\widetilde {A(b)}$ is given by $|\xi (\ud z)|_p = \sup_{(\ud u,\ud v)} |a_{\ud u,\ud v}|_p$.

Let $\frg_\Bbbk$ be an abelian Lie algebra over $\Bbbk$ of dimension $N=n+k+1$. Let $\b_{1}, \b_2, \cdots, \b_{N}$ be a ${\Bbbk}$-basis of $\frg_\Bbbk$. 
We associate a Lie algebra representation $\tilde \rho$ on $\widetilde \Ab$ of 
$\frg_\Bbbk$ as follows:
$$
\tilde \rho(\b_i) := \frac{\pi^{Mb}}{\g} \left( \pa{z_i} \right), \text { for $i=1, 2, \cdots, N$}
$$
where we notice that 
$$
\frac{\partial}{\partial z_i} \widetilde {A(b)}\subset \widetilde {A(b)}, \quad i=1,2, \cdots, N.
$$
We extend this ${\Bbbk}$-linearly to
get a Lie algebra representation $\rho_X: \frg_\Bbbk\to\End_{{\Bbbk}}(\widetilde \Ab)$. 
Then we consider \textit{the dual Chevalley-Eilenberg complex} $(\widetilde \cAb, \tilde \Delta)$ associated to $\tilde \rho$ as in the introduction.
The $\Z$-graded super-commutative algebra $\cA_{\tilde \rho}$ and the differential $\tilde \Delta$ is given explicitly as follows\footnote{The auxiliary factor $\frac{\pi^{Mb}}{\gamma}$ is a technical condition needed to prove (b) of Theorem \ref{ftheorem}.}:
\bea \label{pbd}
\begin{split}
\widetilde \cAb&=& \widetilde \Ab[\ud \eta]= \widetilde \Ab[\eta_{1},\eta_2, \cdots, \eta_N], \\
\tilde \Delta&=& \frac{\pi^{Mb}}{\g} \cdot \pa{z_i}\pa{\eta_i}:\widetilde \cAb \to \widetilde \cAb.
\end{split}
\eea
 We have
$$
\xymatrix{0 \ar[r] & \widetilde \cAb^{-N} \ar[r]^-{\tilde \Delta} & \widetilde \cAb^{-N+1} \ar[r]^-{\tilde \Delta} & \cdots \ar[r]^-{\tilde \Delta} & \widetilde \cAb^{-1} \ar[r]^-{\tilde \Delta} & \widetilde \cAb^{0}=\widetilde\Ab \ar[r] & 0}
$$
where
$$
 \widetilde \cAb^{-s} = \bigoplus_{1\leq i_1 < \cdots < i_s\leq N} \widetilde \Ab \cdot \eta_{i_1} \cdots \eta_{i_s}, \quad 0 \leq s \leq N.
$$
\begin{proposition} \label{PBV}
$(\widetilde \cAb,\cdot, \tilde \Delta, \ell_2^{\tilde\Delta})$ is a GBV-algebra over ${\Bbbk}$.
\end{proposition}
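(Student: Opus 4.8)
The plan is to verify the three conditions packaged in Definition~\ref{bvd}(b), leaning on the $L_\infty$-machinery of subsection~\ref{sub3.2}: by the general result recalled there, $(\widetilde \cAb,\ud\ell^{\tilde\Delta})$ is automatically an $L_\infty$-algebra, so the genuine work is (i) to see that $\widetilde\cAb$ is a unital $\bZ$-graded super-commutative associative $\Bbbk$-algebra on which $\tilde\Delta$ is a well-defined continuous degree~$+1$ operator with $\tilde\Delta^2=0$, and (ii) to identify $\ell_2^{\tilde\Delta}$ explicitly and read off from it that the $L_\infty$-structure $\ud\ell^{\tilde\Delta}$ truncates to a shifted dgla and that $\ell_2^{\tilde\Delta}$ is compatible with the product~$\cdot$. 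Conditions (i) and (ii) together are exactly the definition of a GBV algebra.

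For (i): $\widetilde\cAb=\widetilde\Ab[\eta_1,\dots,\eta_N]$ is the exterior algebra over the commutative $p$-adic Banach algebra $\widetilde\Ab$ on the odd generators $\eta_1,\dots,\eta_N$ placed in cohomological degree $-1$, so it is manifestly unital, $\bZ$-graded, super-commutative and associative; the decomposition $\widetilde\cAb=\bigoplus_{-N\le m\le 0}\widetilde\cAb^m$ displayed before the statement records this. Moreover $\frac{\partial}{\partial z_i}$ preserves $\widetilde\Ab$ (noted just above \eqref{pbd}) and $\frac{\partial}{\partial \eta_i}$ is the usual odd derivation on the exterior part, so each summand $\frac{\pi^{Mb}}{\gamma}\frac{\partial}{\partial z_i}\frac{\partial}{\partial \eta_i}$ is a continuous $\Bbbk$-linear map sending $\widetilde\cAb^{-s}$ into $\widetilde\cAb^{-s+1}$; hence $\tilde\Delta$ has degree $+1$. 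Finally $\tilde\Delta^2=0$ because the odd operators $\frac{\partial}{\partial z_i}\frac{\partial}{\partial \eta_i}$ pairwise anticommute — the $\frac{\partial}{\partial z_i}$ commute among themselves, the $\frac{\partial}{\partial \eta_i}$ anticommute among themselves, and $\frac{\partial}{\partial z_i}$ commutes with $\frac{\partial}{\partial \eta_j}$ — while each squares to $0$ since $(\frac{\partial}{\partial \eta_i})^2=0$; so the square of their sum vanishes.

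For (ii): expanding the defining formula \eqref{elltwo} by the graded Leibniz rules for $\frac{\partial}{\partial \eta_i}$ (odd) and then $\frac{\partial}{\partial z_i}$ (even), the second-order terms cancel and one is left with
\[
\ell_2^{\tilde\Delta}(a,b) = \frac{\pi^{Mb}}{\gamma}\sum_{i=1}^N\Bigl(\frac{\partial a}{\partial \eta_i}\cdot\frac{\partial b}{\partial z_i} + (-1)^{|a|}\frac{\partial a}{\partial z_i}\cdot\frac{\partial b}{\partial \eta_i}\Bigr),
\]
which one recognizes, up to the usual degree-dependent sign, as a scalar multiple of the odd Poisson bracket attached to $\o=\sum_i d\eta_i\wedge dz_i$. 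From this closed form, $\ell_2^{\tilde\Delta}(a,-)$ is visibly a graded derivation of the product of degree $|a|+1$; i.e.\ $\ell_2^{\tilde\Delta}$ satisfies the Poisson--Leibniz identity of Definition~\ref{bvd}(a). Feeding this biderivation property into the inductive formula of Definition~\ref{defdesc} forces $\ell_3^{\tilde\Delta}=0$, hence $\ell_n^{\tilde\Delta}=0$ for all $n\ge 3$ by induction; therefore $(\widetilde\cAb,\tilde\Delta,\ell_2^{\tilde\Delta})$ is a shifted dgla, the graded antisymmetry, graded Jacobi identity, and the fact that $\tilde\Delta$ is a graded derivation of $\ell_2^{\tilde\Delta}$ being precisely the surviving $L_\infty$-relations (the derivation property can also be obtained directly from $\tilde\Delta^2=0$ together with \eqref{elltwo}). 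Combining the shifted-dgla axioms with the Poisson--Leibniz identity shows that $(\widetilde\cAb,\cdot,\ell_2^{\tilde\Delta})$ is a G-algebra, and together with (i) this completes the verification.

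I do not expect a genuine obstacle: this is the classical fact that a square-zero second-order differential operator annihilating $1$ on a super-commutative algebra generates a Gerstenhaber bracket via \eqref{elltwo} (Koszul, Getzler), transplanted to the overconvergent Banach setting; the only inputs special to this situation are the already-recorded stability $\frac{\partial}{\partial z_i}\widetilde\Ab\subseteq\widetilde\Ab$ and the fact that $\pi^{Mb}/\gamma$ is a unit of $\Bbbk$, so the auxiliary rescaling changes nothing. The one point demanding attention is the bookkeeping of Koszul signs, so that all identities come out in exactly the normalization of Definition~\ref{bvd}; if one prefers to avoid recomputing the Gerstenhaber axioms by hand, one may instead invoke the general ``BV generator'' theorem and check only its hypotheses — $\tilde\Delta$ has differential order $\le 2$, $\tilde\Delta(1)=0$, $\tilde\Delta^2=0$ — all of which are immediate from the explicit shape of $\tilde\Delta$.
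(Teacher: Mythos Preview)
Your proof is correct and follows the same route as the paper: the paper's argument is simply that $\tilde\Delta$ is a homogeneous differential operator of order~$2$, whence $\ell_m^{\tilde\Delta}=0$ for $m\ge 3$ so $(\widetilde\cAb,\tilde\Delta,\ell_2^{\tilde\Delta})$ is a shifted dgla, and then the G-algebra axioms follow by direct computation. You have merely unpacked these two steps in more detail (explicit formula for $\ell_2^{\tilde\Delta}$, verification that $\tilde\Delta^2=0$ and that $\partial/\partial z_i$ preserves $\widetilde\Ab$), but the strategy is identical.
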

\begin{proof}
We need to show that $(\widetilde \cAb, \tilde \Delta, \ell_2^{\tilde\Delta})$ is a dgla and $(\widetilde \cAb, \cdot, \ell_2^{\tilde \Delta})$ is a G-algebra. The fact that $\tilde \Delta$ is a homogeneous differential operator of order 2 implies that $\ell_m^{\tilde \Delta} = 0$ for $m\geq 3$, which implies that $(\widetilde \cAb, \tilde \Delta, \ell_2^{\tilde\Delta})$ is a dgla. It follows that $(\widetilde \cAb, \cdot, \ell_2^{\tilde \Delta})$ is a G-algebra by a straightforward computation.
\end{proof}
%

Let us consider the Dwork operator $T_q:\widetilde \Ab \to \widetilde \Ab$ defined by
\begin{align}\label{DPSI}
T_q\left(\sum_{\ud w \in \bNn^N} a_{\ud w} \ud z^{\ud w}\right) = \sum_{\ud w \in \bNn^N} a_{q\ud w} \ud z^{\ud w}.
\end{align}
Note that the image of $T_q$ belongs to $\widetilde {A(bq)} \subset \widetilde {A(b)}$. By using the relation
\begin{align*}
T_q\circ z_i\frac{\partial}{\partial z_i}=qz_i\frac{\partial}{\partial z_i}\circ T_q,
\end{align*}
one can easily cook up a cochain endomorphism of $(\widetilde \cAb, \tilde \Delta)$: If we define a $\Bbbk$-linear endomorphism $\Psi_{\BP^n}$ on $\widetilde \cAb = \bigoplus_{-N\leq m \leq 0} \widetilde \cAb^m$ by additivity and the following formula
\begin{align}\label{rfrob}
\Psi_{\BP^n}^{-m}(\xi(\ud z) \cdot \eta_{i_1} \cdots \eta_{i_m} ):= q^m \frac{T_q \left( \xi(\ud z) \cdot \prod_{\substack{j \in \{1,\cdots, N\}  \\ j\neq i_1,\cdots, i_m} } z_j\right)}{\prod_{\substack{j \in \{1,\cdots, N\}  \\ j\neq i_1,\cdots, i_m} } z_j}\cdot \eta_{i_1} \cdots \eta_{i_m},
\end{align}
for each $m \geq 0$ and $\xi(\ud z) \in \widetilde \Ab=\widetilde \cAb^0$. 

\begin{lem}
The map $\Psi_{\BP^n}: (\widetilde \cAb, \tilde \Delta) \to (\widetilde \cAb, \tilde \Delta)$ is a cochain map.
\end{lem}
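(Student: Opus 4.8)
The plan is to verify the identity $\Psi_{\BP^n}\circ\tilde\Delta=\tilde\Delta\circ\Psi_{\BP^n}$ on a topological basis and then conclude by continuity. Since both operators are $\Bbbk$-linear and continuous for the Banach norm on $\widetilde{\cAb}$, it suffices to check the identity on elements $\xi(\ud z)\,\eta_J:=\xi(\ud z)\,\eta_{i_1}\cdots\eta_{i_m}$, where $\xi$ is a monomial in $\ud z$ and $J=\{i_1<\cdots<i_m\}\subseteq\{1,\dots,N\}$; I write $J^c=\{1,\dots,N\}\setminus J$ and $z_{J^c}=\prod_{j\in J^c}z_j$. (Along the way one records that $\Psi_{\BP^n}$ is well defined, because $T_q(\xi z_{J^c})$ is again divisible by $z_{J^c}$: the exponent vectors occurring in $T_q(\cdot)$ are $1/q$ times those of the argument, and for a non-negative integer $w_j$ one has $qw_j\ge 1\Leftrightarrow w_j\ge 1$.)

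The first step is to compute $\tilde\Delta(\xi\,\eta_J)$. In $\tilde\Delta=\tfrac{\pi^{Mb}}{\g}\sum_{i=1}^N\pa{z_i}\pa{\eta_i}$ only the summands with $i\in J$ survive, and $\pa{\eta_{i_\ell}}$ produces the Koszul sign $(-1)^{\ell-1}$, so
\[\tilde\Delta(\xi\,\eta_J)=\frac{\pi^{Mb}}{\g}\sum_{\ell=1}^{m}(-1)^{\ell-1}\Big(\pa{z_{i_\ell}}\xi\Big)\,\eta_{J\setminus i_\ell}.\]
Applying $\Psi_{\BP^n}$ in $\eta$-degree $m-1$, where the index set of the $\ell$-th term is $J\setminus i_\ell$ with complement $J^c\cup\{i_\ell\}$, gives
\[\big(\Psi_{\BP^n}\circ\tilde\Delta\big)(\xi\,\eta_J)=\frac{\pi^{Mb}}{\g}\,q^{m-1}\sum_{\ell=1}^{m}(-1)^{\ell-1}\,\frac{T_q\!\Big(z_{i_\ell}z_{J^c}\,\pa{z_{i_\ell}}\xi\Big)}{z_{i_\ell}z_{J^c}}\,\eta_{J\setminus i_\ell}.\]

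The second step is the other composition. By definition $\Psi_{\BP^n}(\xi\,\eta_J)=q^{m}\,z_{J^c}^{-1}T_q(\xi z_{J^c})\,\eta_J$, and since $i_\ell\in J$ the operator $\pa{z_{i_\ell}}$ commutes with multiplication by $z_{J^c}^{-1}$ and with the scalar $q^m$, whence
\[\big(\tilde\Delta\circ\Psi_{\BP^n}\big)(\xi\,\eta_J)=\frac{\pi^{Mb}}{\g}\,q^{m}\sum_{\ell=1}^{m}(-1)^{\ell-1}\,z_{J^c}^{-1}\Big(\pa{z_{i_\ell}}T_q(\xi z_{J^c})\Big)\,\eta_{J\setminus i_\ell}.\]
Then I would write $\pa{z_{i_\ell}}=z_{i_\ell}^{-1}\big(z_{i_\ell}\pa{z_{i_\ell}}\big)$ and invoke the commutation relation $T_q\circ z_i\pa{z_i}=q\,z_i\pa{z_i}\circ T_q$ (equivalently $z_i\pa{z_i}\circ T_q=\tfrac1q\,T_q\circ z_i\pa{z_i}$), together with $z_{i_\ell}\pa{z_{i_\ell}}(\xi z_{J^c})=z_{J^c}\big(z_{i_\ell}\pa{z_{i_\ell}}\xi\big)$: the factor $\tfrac1q$ turns $q^m$ into $q^{m-1}$ and reproduces exactly the displayed expression for $\big(\Psi_{\BP^n}\circ\tilde\Delta\big)(\xi\,\eta_J)$. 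Thus the two compositions agree on every basis element, and the lemma follows.

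There is no conceptual difficulty here; the points requiring care are (i) the Koszul signs coming from the odd derivations $\pa{\eta_i}$, and — the substantive point — (ii) the fact that $\Psi_{\BP^n}$ divides by the monomial $z_{J^c}$ attached to the \emph{current} set of $\eta$'s, which changes when $\tilde\Delta$ deletes $\eta_{i_\ell}$. Reconciling ``divide by $z_{i_\ell}z_{J^c}$ after deleting $\eta_{i_\ell}$'' with ``$\pa{z_{i_\ell}}$ of $\big($divide by $z_{J^c}\big)$'' is precisely what the Euler-operator identity for $T_q$ accomplishes, and the single power of $q$ it contributes is what matches $q^m$ against $q^{m-1}$.
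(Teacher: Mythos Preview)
Your proof is correct and follows essentially the same route as the paper's: reduce to basis elements $\xi(\ud z)\,\eta_{i_1}\cdots\eta_{i_m}$, compute both compositions, and reconcile them via the commutation relation $T_q\circ z_i\pa{z_i}=q\,z_i\pa{z_i}\circ T_q$, which converts the factor $q^{m-1}$ into $q^m$. If anything, your write-up is slightly more careful than the paper's, since you track the Koszul signs $(-1)^{\ell-1}$ explicitly and record the well-definedness of $\Psi_{\BP^n}$ (divisibility of $T_q(\xi z_{J^c})$ by $z_{J^c}$), both of which the paper leaves implicit.
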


\begin{proof}
The result follows from the computation below:
\begin{align*}
&\quad\left(\Psi_{\mathbf{P}^n}^{-m+1}\circ \tilde \Delta\right)(\xi(\underline{z})\eta_{i_1}\cdots\eta_{i_m})\\
&=\frac{\pi^{Mb}}{\gamma}\cdot\Psi_{\mathbf{P}^n}^{-m+1}\left(\sum_{k=1}^m\frac{\partial\xi(\underline{z})}{\partial z_{i_k}}\eta_{i_1}\cdots\hat{\eta_{i_k}}\cdots\eta_{i_m}\right)\\
&=\frac{\pi^{Mb}}{\gamma}\cdot\sum_{k=1}^m\frac{q^{m-1}}{z_{i_k}\prod_{\substack{j\in\{1,\cdots,N\} \\ j\neq i_1,\cdots,i_m}}z_j}T_q\left(z_{i_k}\frac{\partial}{\partial z_{i_k}}\left(\xi(\underline{z})\prod_{\substack{j\in\{1,\cdots,N\} \\ j\neq i_1,\cdots,i_m}}z_j\right)\eta_{i_1}\cdots\hat{\eta_{i_k}}\cdots\eta_{i_m}\right)\\
&=\frac{\pi^{Mb}}{\gamma}\cdot\sum_{k=1}^m\frac{q^m}{\prod_{\substack{j\in\{1,\cdots,N\} \\ j\neq i_1,\cdots,i_m}}z_j}\frac{\partial}{\partial z_{i_k}}\left(T_q\left(\xi(\underline{z})\prod_{\substack{j\in\{1,\cdots,N\} \\ j\neq i_1,\cdots,i_m}}z_j\right)\frac{\partial}{\partial\eta_{i_k}}(\eta_{i_1}\cdots\eta_{i_m})\right)\\
&=\frac{\pi^{Mb}}{\gamma}\cdot\sum_{i=1}^N\frac{\partial}{\partial z_i}\frac{\partial}{\partial\eta_i}\left(\frac{q^m}{\prod_{\substack{j\in\{1,\cdots,N\} \\ j\neq i_1,\cdots,i_m}}z_j}T_q\left(\xi(\underline{z})\prod_{\substack{j\in\{1,\cdots,N\} \\ j\neq i_1,\cdots,i_m}}z_j\right)\eta_{i_1}\cdots\eta_{i_m}\right)\\
&=\left(\tilde \Delta\circ\Psi_{\mathbf{P}^n}^{-m}\right)(\xi(\underline{z})\eta_{i_1}\cdots\eta_{i_m}).
\end{align*}
\end{proof}

\subsection{dGBV algebra over a $p$-adic field}\label{sub4.4}

We apply a formal deformation theory to the previous subsection \ref{sub4.3} to reveal $p$-adic quantum BV structure for $X$. In particular, we prove the existence of dGBV algebra and the cochain map in Theorem \ref{ftheorem} with property (a).

\begin{definition}  \label{ssdef}
Let us write $G_\ell(\ud x) = \sum_{\ud u \in \bNn^{n+1}} g_{\ell, \ud u} \ud x^{\ud u} \in \bF_q[\ud x]$ for each $\ell =1, \cdots, k$.
Then its Teichm\"uller lift $\tilde G_\ell(\ud x)$ can be written as $ \sum_{\ud u \in \bNn^{n+1}} \tilde g_{\ell, \ud u} \ud x^{\ud u} \in \bZ_q[\ud x]$. We define $p$-adic Dwork potential functions:
\bea
\begin{split} \label{ssdef}
\tilde S(\ud z) &= \sum_{\ud w \in \bNn^N} \tilde s_{\ud w} \ud z^{\ud w}= 
\sum_{\ell=1}^k \sum_{\ud u \in \bNn^{n+1}} \tilde g_{\ell, \ud u}\cdot y_\ell \ud x^{\ud u},& \\
\hat S(\ud z) &=\sum_{m=0}^\infty  \sum_{\ud w \in \bNn^N}\g_m \left(\tilde s_{\ud w} \ud z^{\ud w} \right)^{p^m}
=\sum_{m,\ell,\ud u}  \g_m \cdot y_\ell^{p^m}  (\tilde g_{\ell, \ud u} \ud x^{\ud u})^{p^m} \quad  (\g_m:=\sum_{i=0}^m \frac{\g^{p^i}}{p^i} )&.
\end{split}
\eea
\end{definition}

Following \cite{AdSp08} again, we use the following notation (using \eqref{thetah}):
\bea\label{exp1}
E_{\tilde S}(\ud z) := \prod_{j=0}^{a-1} \prod_{\ud w} \theta ((\tilde s_{\ud w}  \ud z^{\ud w})^{p^j})=e^{\hat S(\ud z)}, 
\quad \hat E_{\tilde S}(\ud z) :=  \prod_{\ud w} \hat \theta ( \tilde s_{\ud w} \ud z^{\ud w})
\eea
so that $E_{\tilde S}(\ud z) = \hat E_{\tilde S}(\ud z) / \hat E_{\tilde S}(\ud z^q)=e^{\hat S(\ud z)}/e^{\hat S(\ud z^q)}$ where $\ud z^q= (z_1^q, \cdots, z_N^q)$.
As operators, we have the following identity:
$$
e^{-\hat{S}} \circ  \frac{\partial}{\partial z_i} \circ e^{\hat{S}} =\hat E_{\tilde S}(\ud z)^{-1} \circ \frac{\rd}{ \rd z_i} \circ \hat E_{\tilde S}(\ud z) =  \frac{\rd}{\rd z_i}+ \frac{\rd \hat S(\ud z)}{\rd z_i}, \quad i =1, \cdots, N=n+k+1.
$$

We define the differential $\tilde K_S$:
\be
\begin{split}
\tilde K_{S}=\cdot \sum_{i=1}^N\left( e^{-\hat S}\circ \frac{\pi^{Mb}}{\g}\pa{z_i}\circ e^{\hat S} \right) \pa{\eta_i}= \frac{\pi^{Mb}}{\g} \cdot \sum_{i=1}^N \left(\prt{\hat S(\ud z)}{z_i} + \pa{z_i}\right) \pa{\eta_i}:\widetilde \cAb\to \widetilde \cAb.
\end{split}
\ee
We also define another differential operator
\be
\tilde Q_{S}=\tilde K_{S}-\tilde \Delta= \frac{\pi^{Mb}}{\g} \cdot \sum_{i=1}^N \prt{\hat S(\ud z)}{z_i} \pa{\eta_i}: \widetilde \cAb \to \widetilde \cAb.
\ee

Now we define a $\Bbbk$-linear operator $T_q^S : \widetilde \Ab \to \widetilde \Ab$ by
$$
T_q^S := T_q \circ E_{\tilde S}(\ud z) = \hat E_{\tilde S}(\ud z)^{-1} \circ T_q \circ \hat E_{\tilde S}(\ud z) =e^{-\hat{S}} \circ  T_q \circ e^{\hat{S}} 
$$
where $T_q^S$ means the multiplication by $E_{\tilde S}(\ud z)$
followed by $T_q$. Since $ b < \frac{p}{p-1}$, $E_{\tilde S}(\ud z) \in \cA\!\left(\frac{b}{q}\right)$ and $T_q^S$ is a well-defined
endomorphism of $\widetilde \Ab$:
$$
\widetilde \Ab \hookrightarrow \cA\!\left(\frac{b}{q}\right)  \ato{E_{\tilde S}(\ud z)} \cA\!\left(\frac{b}{q}\right)  \ato{T_q} \widetilde \Ab.
$$
Then $T_q^S$ is a completely continuous $\Bbbk$-linear operator on $\widetilde \Ab$; see \cite{AdSp08}.

\begin{definition}
We define a completely continuous $\Bbbk$-linear endomorphism $\Psi_S$ on $\widetilde \cAb = \bigoplus_{m \leq 0} \widetilde \cAb^m$ by additivity and the following formula
$$
\Psi_S^{-m}(\xi(\ud z) \cdot \eta_{i_1} \cdots \eta_{i_m} ):= q^m \frac{T_q^S \left( \xi(\ud z) \cdot \prod_{\substack{j \in \{1,\cdots, N\}  \\ j\neq i_1,\cdots, i_m} } z_j\right)}{\prod_{\substack{j \in \{1,\cdots, N\}  \\ j\neq i_1,\cdots, i_m} } z_j}\cdot \eta_{i_1} \cdots \eta_{i_m},
$$
for each $m \geq 0$ and $\xi(\ud z) \in \widetilde \Ab=\widetilde \cAb^0$.
\end{definition}
%

\begin{thm} \label{mainthm}
$(\widetilde \cAb, \cdot, \tilde K_S, \ell^{\tilde K_S}, \tilde Q_S)$ is a $p$-adic dGBV algebra over a $p$-adic field $\Bbbk$ associated to $X_{\ud G}$ and
the map $\Psi_{S}: (\widetilde \cAb, \tilde K_S) \to (\widetilde \cAb, \tilde K_S)$ is a cochain map.
\end{thm}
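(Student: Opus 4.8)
The plan is to deduce the theorem from the structure already in place in subsection~\ref{sub4.3} --- the GBV algebra $(\widetilde\cAb,\cdot,\tilde\Delta,\ell_2^{\tilde\Delta})$ of Proposition~\ref{PBV} and the cochain map $\Psi_{\BP^n}$ of the Lemma there --- via the single observation that $\tilde K_S$ and $\Psi_S$ are the conjugates of $\tilde\Delta$ and $\Psi_{\BP^n}$ by the invertible multiplication operator $e^{\hat S}=\hat E_{\tilde S}(\ud z)$. Since $\hat E_{\tilde S}(\ud z)$ involves no $\eta_i$ it commutes with every $\pa{\eta_i}$, so the operator identity $e^{-\hat S}\circ\pa{z_i}\circ e^{\hat S}=\pa{z_i}+\prt{\hat S}{z_i}$ recorded above gives $\tilde K_S=e^{-\hat S}\circ\tilde\Delta\circ e^{\hat S}$ on $\widetilde\cAb$, and unwinding the two defining formulas (the scalars $q^m$ and the monomials $\prod_j z_j$ commute with multiplication by $e^{\pm\hat S}$ and with the $\pa{\eta_i}$) gives $\Psi_S=e^{-\hat S}\circ\Psi_{\BP^n}\circ e^{\hat S}$. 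I keep the $p$-adic bookkeeping implicit: $e^{\hat S}$ is not an element of $\widetilde\Ab$, so these compositions are read through the chain $\widetilde\Ab\hookrightarrow\cA\!\left(\tfrac bq\right)\xrightarrow{\,e^{\hat S}\,}\cA\!\left(\tfrac bq\right)\xrightarrow{\,T_q\,}\widetilde\Ab$, and the facts that $\tilde K_S,\tilde Q_S,T_q^S,\Psi_S$ are genuine endomorphisms of $\widetilde\cAb$ (resp.\ $\widetilde\Ab$) are borrowed from \cite{AdSp08}.

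For the GBV assertion I would argue exactly as in the proof of Proposition~\ref{PBV}. First, $\tilde K_S^2=0$: writing $D_i=\pa{z_i}+\prt{\hat S}{z_i}$ and using that the $D_i$ commute with the $\pa{\eta_j}$ one gets $\tilde K_S^2=\bigl(\tfrac{\pi^{Mb}}{\g}\bigr)^2\sum_{i<j}[D_i,D_j]\,\pa{\eta_i}\pa{\eta_j}$, and $[D_i,D_j]=0$ by equality of mixed partials of $\hat S$ (equivalently $\tilde K_S^2=e^{-\hat S}\circ\tilde\Delta^2\circ e^{\hat S}=0$). Next, $\tilde K_S=\frac{\pi^{Mb}}{\g}\sum_{i=1}^N D_i\pa{\eta_i}$ is a differential operator of order $\le 2$ on $(\widetilde\cAb,\cdot)$ with $\tilde K_S(1)=0$, so its descendant hierarchy satisfies $\ell_m^{\tilde K_S}=0$ for $m\ge 3$ and hence $(\widetilde\cAb,\ud\ell^{\tilde K_S})$ is a (shifted) dgla. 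Finally, since $\tilde K_S$ differs from $\tilde\Delta$ by the degree-$1$ derivation $\tilde Q_S$ of $(\widetilde\cAb,\cdot)$, formula \eqref{elltwo} gives $\ell_2^{\tilde K_S}=\ell_2^{\tilde\Delta}$, so $(\widetilde\cAb,\cdot,\ell_2^{\tilde K_S})$ is the G-algebra of Proposition~\ref{PBV}; thus $(\widetilde\cAb,\cdot,\tilde K_S,\ell_2^{\tilde K_S})$ is a GBV algebra.

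Next I handle $\tilde Q_S$. The operator $\tilde Q_S=\frac{\pi^{Mb}}{\g}\sum_{i=1}^N\prt{\hat S}{z_i}\pa{\eta_i}$ is an $\widetilde\Ab$-linear combination of the odd degree-$1$ derivations $\pa{\eta_i}$, hence is itself a degree-$1$ derivation of $(\widetilde\cAb,\cdot)$, and $\tilde Q_S^2=\bigl(\tfrac{\pi^{Mb}}{\g}\bigr)^2\sum_{i<j}\bigl[\prt{\hat S}{z_i},\prt{\hat S}{z_j}\bigr]\pa{\eta_i}\pa{\eta_j}=0$ since multiplication operators commute, so $(\widetilde\cAb,\cdot,\tilde Q_S)$ is a cdga. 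For the remaining compatibility $\tilde K_S\tilde Q_S+\tilde Q_S\tilde K_S=0$ I use $\tilde K_S=\tilde\Delta+\tilde Q_S$ and $\tilde K_S^2=\tilde\Delta^2=\tilde Q_S^2=0$: expanding $\tilde K_S^2$ forces $\tilde\Delta\tilde Q_S+\tilde Q_S\tilde\Delta=0$, whence $\tilde K_S\tilde Q_S+\tilde Q_S\tilde K_S=\tilde\Delta\tilde Q_S+\tilde Q_S\tilde\Delta=0$ (directly, the coefficient of $\pa{\eta_i}\pa{\eta_j}$ in this anticommutator is again a difference of mixed partials of $\hat S$, hence $0$). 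Combined with the previous paragraph this shows $(\widetilde\cAb,\cdot,\tilde K_S,\ell^{\tilde K_S},\tilde Q_S)$ is a $p$-adic dGBV algebra over $\Bbbk$, where $\ell^{\tilde K_S}=\ud\ell^{\tilde K_S}=(\tilde K_S,\ell_2^{\tilde K_S},0,0,\dots)$.

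Finally, the cochain-map claim is transported from the Lemma of subsection~\ref{sub4.3}: since $\Psi_{\BP^n}\circ\tilde\Delta=\tilde\Delta\circ\Psi_{\BP^n}$, conjugating by $e^{\hat S}$ and using $\tilde K_S=e^{-\hat S}\circ\tilde\Delta\circ e^{\hat S}$ together with $\Psi_S=e^{-\hat S}\circ\Psi_{\BP^n}\circ e^{\hat S}$ gives $\Psi_S\circ\tilde K_S=e^{-\hat S}\circ\Psi_{\BP^n}\circ\tilde\Delta\circ e^{\hat S}=e^{-\hat S}\circ\tilde\Delta\circ\Psi_{\BP^n}\circ e^{\hat S}=\tilde K_S\circ\Psi_S$; alternatively one repeats that Lemma's computation verbatim, now with the conjugated Dwork relation $T_q^S\circ z_iD_i=q\,z_iD_i\circ T_q^S$ obtained from $T_q\circ z_i\pa{z_i}=q\,z_i\pa{z_i}\circ T_q$. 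The step I expect to be the real obstacle is not algebraic but analytic: one must check that every conjugation $e^{-\hat S}\circ(-)\circ e^{\hat S}$ is performed inside a space ($\cA\!\left(\tfrac bq\right)$ and its variants) on which $e^{\pm\hat S}$ and $T_q$ act and on which all intermediate operators are defined, so that these formal identities are legitimate --- precisely the overconvergence input imported from \cite{AdSp08}; the genuinely new content is only the algebraic repackaging $\tilde K_S=e^{-\hat S}\tilde\Delta e^{\hat S}=\tilde\Delta+\tilde Q_S$, $\Psi_S=e^{-\hat S}\Psi_{\BP^n}e^{\hat S}$ that renders the BV structure manifest.
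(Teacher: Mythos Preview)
Your proof is correct and rests on the same identity the paper uses --- $\tilde K_S=e^{-\hat S}\tilde\Delta e^{\hat S}$ and $\Psi_S=e^{-\hat S}\Psi_{\BP^n}e^{\hat S}$ --- but the packaging differs. The paper routes everything through the formal deformation machinery of subsection~\ref{sub3.3}: it introduces a family over $\mathfrak{a}=\Bbbk[[t_1,\ldots,t_k]]$, the $q$-power endomorphism $\sigma$, and a Maurer--Cartan element $\tilde\Gamma_{\ud t}$, computes $\tilde\Delta_{\tilde\Gamma_{\ud t}}$ and $\Psi_{\BP^n,\sigma,\tilde\Gamma_{\ud t}}$ formally in $\mathfrak{a}\otimes\Bbbk[[\ud z]]$, and then specializes at $\ud t=\ud 1$ to recover $\tilde K_S$ and $\Psi_S$. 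That detour buys the paper a worked instance of its own deformation theory and, more importantly, sets up exactly the objects ($\tilde\Gamma_{\ud t}$, $\Gamma_{\ud t}$, $\sigma$) needed later for Theorem~\ref{stheorem} and the proof of part~(d). Your direct conjugation argument is cleaner for the theorem at hand and is in fact more complete on the dGBV side: the paper's proof explicitly verifies only that $(\widetilde\cAb,\tilde K_S)$ is a cochain complex and $\Psi_S$ a cochain map, leaving the remaining axioms (that $\tilde Q_S$ is a derivation, $\tilde Q_S^2=0$, $\tilde K_S\tilde Q_S+\tilde Q_S\tilde K_S=0$, $\ell_2^{\tilde K_S}=\ell_2^{\tilde\Delta}$) implicit, whereas you spell them out. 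Both proofs handle the analytic obstacle identically --- acknowledging that $e^{\hat S}\notin\widetilde{A(b)}$, computing formally, and then invoking the estimate~\eqref{estimate} (equivalently \cite{AdSp08}) to see that the resulting operators are honest endomorphisms of $\widetilde\cAb$.
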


\begin{proof}

The main tool is to apply the deformation formalism of the subsection \ref{sub3.3} to $(\widetilde \cAb, \cdot, \tilde \Delta)$.
Let $X_{\ud t}\subseteq\mathbf{P}^n$ be a family of smooth projective complete intersections  parametrized by variables $\ud t =t_1, \cdots, t_k$.
For $i =1, \cdots, k$, let $G_i(\ud t, \ud x) \in \bF[\ud t, \ud x]$ be homogeneous polynomials of $\ud x$-degree $d_i$, i.e. 
$$
G_i (\ud t, \ud x) = \sum_{\ud w \in \bZ_{\geq 0}^k}G_{i, \ud w}(\ud x) \ud t^{\ud w} \in \bF_q[\ud t, \ud x], \quad 
G_{i, \ud w}(\ud x) = \sum_{\ud u \in \bZ_{\geq 0}^{n+1}} g_{i,\ud w, \ud u}\ud x^{\ud u} \in \bF_q[\ud x],
$$
where $\deg (G_{i,\ud w}(\ud x)) = d_i$, such that $G_i(\ud x) = G_i (\ud 1, \ud x)$ and $G_i(\ud 0, \ud x) =0$.
Let $\tilde G_i(\ud t, \ud x)$ be the
 Teichm\"uller lifting:
\begin{align*}
\tilde G_i(\ud t, \ud x)=\sum_{\underline{u}\in\mathbb{Z}_{\geq 0}^{n+1}, \ud w \in \bZ_{\geq 0}^k}\tilde{g}_{i,\underline{u}, \ud w}\underline{x}^{\underline{u}} \ud t^{\ud w}
\end{align*}
where $\tilde{g}_{i,\underline{u}, \ud w} \in \bZ_q$ is the Teichm\"uller lifting of $g_{i,\underline{u}, \ud w} \in \bF_q$.
Let $S(\ud t, \ud z)=\sum_{i=1}^k y_i G_i(\ud t, \ud x)$ and
let $\tilde S(\ud t, \ud z)$ be the Teichm\"uller lifting of $S(\ud t, \ud z)$:
$$
\tilde S(\ud t, \ud z) = \sum_{i =1}^k y_i \tilde G_i(\ud t, \ud x) =
\sum_{i,\ud u, \ud w}y_i \tilde{g}_{i,\underline{u}, \ud w}\underline{x}^{\underline{u}} \ud t^{\ud w}
\in \bZ_q[\ud t, \ud z]=\bZ_q[\ud t, \ud x, \ud y],
$$
where $\bZ_q$ is the ring of integers of $\bQ_q$.

Let $\frak{a} = \Bbbk[[t_1,\cdots,t_k]] \in\hbox{Ob}(\widehat{\art_{\Bbbk}} )$ with degree $|t_i|=0$.
Since $\widetilde \cAb^{1}=0$, the Maurer-Cartan equation $\tilde \Delta(e^\Gamma)=0$ is vacuous, i.e. we can deform $(\widetilde {\cAb}, \cdot, \tilde \Delta)$ formally by any element $\tilde \G$ in $(\mathfrak{m_a}\otimes\Bbbk[[ \ud z ]])^0$. 
We take
\begin{align}\label{sgm}
\xymatrix{\sigma:\Bbbk[[t_1,\cdots,t_k]] \ar[r] & \Bbbk[[t_1,\cdots,t_k]] & t_i \ar@{|->}[r] & t_i^q}
\end{align}
and
\begin{align}\label{onef}
\tilde \Gamma_{\ud t}:=\sum_{i,\underline{u}, \ud w}\sum_{\ell\geq0}\gamma_\ell y_i^{p^\ell} \tilde{g}_{i,\underline{u}, \ud w}^{p^\ell}\underline{x}^{p^\ell\underline{u}}\ud t^{p^\ell \ud w}
=\sum_{i,\underline{u}, \ud w}\sum_{\ell\geq0}\gamma_\ell \left( y_i \tilde{g}_{i,\underline{u}, \ud w}\underline{x}^{\underline{u}}\ud t^{ \ud w}\right)^{p^\ell}
\in((\underline{t})\otimes \Bbbk[[\ud z]])^0,
\end{align}
and consider
\begin{align*}
e^{\tilde \Gamma_{\ud t}}
&=\exp\left(\sum_{i,\underline{u}, \ud w}\sum_{\ell\geq0}\gamma_\ell \left( y_i \tilde{g}_{i,\underline{u}, \ud w}\underline{x}^{\underline{u}}\ud t^{ \ud w}\right)^{p^\ell}\right) 
=\prod_{i,\underline{u}, \ud w}\exp\left(\sum_{\ell\geq0}\gamma_\ell \left( y_i \tilde{g}_{i,\underline{u}, \ud w}\underline{x}^{\underline{u}}\ud t^{ \ud w}\right)^{p^\ell}\right)=\prod_{i,\underline{u}, \ud w} \prod_{\ell\geq0}\hat \theta\left( y_i \tilde{g}_{i,\underline{u}, \ud w}\underline{x}^{\underline{u}}\ud t^{ \ud w}\right)
\end{align*}
where $\hat \theta$ is given in \eqref{thetah}.
Note that neither $\tilde \Gamma_{\ud t}$ nor $e^{\tilde \Gamma_{\ud t}}$ belong to
$\mathfrak{m_a} \otimes \widetilde {A(b)}=(\mathfrak{m_a} \otimes \widetilde \cAb)^0$.
But the formal deformation of $\tilde \Delta$ by $\tilde \Gamma_{\ud t}$ (note that $\tilde \Delta$ is $\fra$-linear) can be computed as follows:
\begin{align*}
\tilde \Delta_{\tilde \Gamma_{\ud t}}:=
e^{-\tilde \Gamma_{\ud t}} \circ \tilde \Delta \circ e^{\tilde \Gamma_{\ud t}}=\frac{\pi^{Mb}}{\gamma}\cdot\sum_{i=1}^N\left(\frac{\partial\hat{S}(\ud t, \underline{z})}{\partial z_i}+\frac{\partial}{\partial z_i}\right)\frac{\partial}{\partial\eta_i},
\end{align*}
(the equalities here are formal ones in $\fra \otimes \Bbbk[[\ud z]]$ without considering $p$-adic convergence) 
where 
$$
\frac{\partial\hat{S}(\ud t, \underline{z})}{\partial z_i}
 =\frac{\partial}{\partial z_i} \left(\tilde \G|_{\ud t =\ud 1}\right), \quad i=1, \cdots, N,
$$
 by \cite[Lemma 3.1]{PP} and the fact $\ell_m^{\tilde \Delta}=0, \ m\geq 3$.
Then the expression $\frac{\pi^{Mb}}{\gamma}\cdot\sum_{i=1}^N\left(\frac{\partial\hat{S}(\underline{t},\underline{z})}{\partial z_i}+\frac{\partial}{\partial z_i}\right)\frac{\partial}{\partial\eta_i}$
actually makes sense as a $\Bbbk$-linear operator of $\fra \otimes \widetilde \cAb$, since 
$$
\frac{\partial\hat{S}(\underline{t},\underline{z})}{\partial z_i} \in (\ma \otimes \widetilde \cAb)^0, \quad i=1, \cdots, N,
$$
by using the estimate (\ref{estimate}).

To compute $\Psi_{\BP^n, \sigma,\tilde \Gamma_{\ud t}}$ (the deformation of $\Psi_{\BP^n}$ by $\sigma$ and $\tilde \Gamma_{\ud t}=\tilde \Gamma_{\ud t}(\ud z)$), observe that
\begin{align*}
E(\underline{z},\underline{t}^q)\circ T_q=T_q\circ E(\underline{z}^q,\underline{t}) \quad \text{in }  \ma \otimes \Bbbk[[\ud z]]
\end{align*}
for any power series $E(\underline{z},\underline{t})$ so formally (note that $\Psi_{\BP^n}$ is $\fra$-linear)
\begin{align*}
\Psi_{\BP^n, \sigma,\tilde \Gamma_{\ud t}}
=e^{-\sigma(\tilde \Gamma_{\ud t})} \circ \Psi_{\BP^n} \circ e^{\tilde \Gamma_{\ud t}}
=\Psi_{\BP^n}\circ \frac{e^{\tilde \Gamma_{\ud t}(\ud z)}}{e^{\tilde \Gamma_{\ud t}(\ud z^q)}}
=\Psi_{\BP^n}\circ e^{\Gamma_{\ud t}},
\end{align*}
where 
\begin{align}\label{twof}
\Gamma_{\ud t}=\Gamma_{\ud t}(\ud z):=\sum_{j=0}^{a-1}\sum_{i,\underline{u}, \ud w}\sum_{\ell\geq0}\frac{ \gamma^{p^\ell} \left(\tilde{g}_{i,\underline{u}}y_i\underline{x}^{\underline{u}} \ud t^{\ud w} \right)^{p^{j+\ell}}}{p^{\ell}}.
\end{align}
Note that the last (formal) equality $\frac{e^{\tilde \Gamma_{\ud t}(\ud z)}}{e^{\tilde \Gamma_{\ud t}(\ud z^q)}}
=e^{\Gamma_{\ud t}}$ crucially uses the fact $\tilde G_i (\ud t, \ud x)$ is the Teichm\"uller lifting of $G_i(\ud t, \ud x)$:
$\tilde{g}_{i,\underline{u}, \ud w}^q= \tilde{g}_{i,\underline{u}, \ud w}$.
The key point here is that the expression $\Psi_{\BP^n}\circ e^{\G_{\ud t}}$ makes sense\footnote{If we use $\Psi_{\BP^n, \tilde \Gamma_{\ud t}}$ instead of $\Psi_{\BP^n, \sigma,\tilde \Gamma_{\ud t}}$, then we can not achieve this.} as a $\Bbbk$-linear operator of $\fra \otimes \widetilde \cAb$, since  $e^{\G_{\ud t}}$ belongs to $\mathfrak{m_a} \otimes \widetilde {A(\frac{b}{q})}$, $\Psi_{\BP^n}: \widetilde {A(\frac{b}{q})} \to  \widetilde {A(b)},$ and $ \widetilde {A(b)} \subset  \widetilde {A(\frac{b}{q})}$.
%
By evaluating at $\ud t = \ud 1$, one can directly check that
\begin{align}\label{xdef}
\begin{split}
\tilde K_S&:=\left.\tilde \Delta_{\tilde \G_{\ud t}}\right|_{\ud t=\ud 1} \\
\Psi_S&:= \left.\Psi_{\BP^n,\sigma, \tilde \G_{\ud t}}\right|_{\ud t=\ud 1}.
\end{split}
\end{align}

By the formal identity in the deformation theory, we conclude that $(\widetilde \cAb, \tilde K_S)$ is a cochain complex. $\Psi_S: (\widetilde \cAb, \tilde K_S) \to (\widetilde \cAb, \tilde K_S)$ is a cochain map. Now the part $(a)$ follows directly by applying the construction of Definition \ref{defdesc} to $\Psi_S$.
\end{proof}

\begin{proof}[A proof of (a) of Theorem \ref{ftheorem}]
The existence of $L_\infty$-endomorphism follows directly by applying the construction of Definition \ref{defdesc} to the cochain map $\Psi_S: (\widetilde \cAb, \tilde K_S) \to (\widetilde \cAb, \tilde K_S)$. The existence of the decomposition is clear from the construction of $\widetilde \cAb$.
\end{proof}

\subsection{$p$-adic $0$-dimensional quantum field theory and BV formalism}\label{sub4.5}

We briefly explain the physical interpretation ($p$-adic quantum BV formalism) behind the $p$-adic construction. We decided to include it because the physical viewpoint was crucial to the conception of the paper. 
Even if the $p$-adic description is not entirely precise\footnote{A $p$-adic measure (or distribution) theory and $p$-adic rigid geometry need a more careful analysis.} from a mathematical point of view, our hope is that it will be more helpful than confusing in guiding the reader through the rather elaborate constructions in the article.

%

%


We consider the polydisc
\be
D_N(p^{b|\ud v|}):=\{ \ud z \in \Bbbk^N \ : \ |z_i|_p\leq p^{b|\ud v|}, \ i=1, \cdots, N\}.
\ee
Then the elements of $\widetilde {A(b)}$ can be viewed as $p$-adic analytic functions on the polydisc $D_N(p^{b|\ud v|})$.
Our $p$-adic 0-dimensional field theory $\QFT^\Bbbk_S$ has $N$-number of points as space-time and $D_N(p^{b|\ud v|})$ as space of fields.

\begin{prop}
The $p$-adic dGBV algebra $(\widetilde \cAb, \cdot, \tilde K_S, \ell_2^{\tilde K_S}, \tilde Q_S)$
provides quantum BV formalism\footnote{The Berezinian measure in Definition \ref{bv2} does not make sense, since there is no $p$-adic Haar measure ($p$-adically bounded) on $\bQ_p^N$ or $\Bbbk^N$. But the Berezinian measure part can be conceptually replaced by a cochain map $(\widetilde \cAb, \tilde K_S) \to (\Bbbk,0)$ or a cochain endomorphism of $(\widetilde \cAb, \tilde K_S)$. } for $X$:
\begin{itemize}
\item the (-1)-shifted cotangent bundle $T^*[-1]\left(D_N(p^{b|\ud v|})\right)$ over $\Bbbk$, whose (global section of) sheaf of $p$-adic analytic functions is given by $\widetilde \cAb$
\item  odd symplectic form $\o=\sum_{i=1}^N d\eta_i \wedge d z_i$
 and the associated Poisson bracket\footnote{One can show that $(-1)^{|f|}\ell_2^{\tilde K_S}(f,g)=(-1)^{|f|}\ell_2^{\tilde \Delta}(f,g)=\frac{\pi^{Mb}}{\g} \{ f, g\}.$} is given by 
\be
\{ f, g\}:= \sum_{i=1}^N \frac{\partial f}{\partial z_i} \frac{\partial g}{\partial \eta_i} - (-1)^{|f|+1} \frac{\partial f}{\partial \eta_i} \frac{\partial g}{\partial z_i}, \quad f, g \in \cA.
\ee
\item the BV action functional $\hat S$ which satisfies the quantum master equation\footnote{Note that $\tilde K_S (f) = \frac{\pi^{Mb}}{\g} \{\hat S, f \} + \frac{\pi^{Mb}}{\g} \Delta(f)$.}
\be
\{\hat S, \hat S \} + \Delta (\hat S) = 0 \iff \tilde K_S(\hat S) =0
\ee

\end{itemize}
\end{prop}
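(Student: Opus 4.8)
The plan is to verify, one after another, the three pieces of data demanded by Definition~\ref{bv2}, and then to see that the quantum master equation is nothing more than a rephrasing of an identity already implicit in the definition of $\tilde K_S$. To begin I would just unwind definitions: by \eqref{pbd} the algebra $\widetilde \cAb$ is the free $\bZ$-graded super-commutative $\widetilde{A(b)}$-algebra on the odd generators $\eta_1,\dots,\eta_N$ of degree $-1$ (with $|z_i|=0$), and, viewing $\widetilde{A(b)}$ as the ring of overconvergent analytic functions on the polydisc $D_N(p^{b|\ud v|})$ in the even coordinates $\ud z$, this is exactly the ring of (global) analytic functions on the $(-1)$-shifted cotangent bundle $\cF:=T^*[-1]\bigl(D_N(p^{b|\ud v|})\bigr)$, in which the $\eta_i$ are the fibre coordinates dual to the $z_i$. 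Thus $(\ud z,\ud\eta)$ is a Darboux chart for $\cF$, and this first item requires no computation.

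For the odd symplectic datum I would note that on this chart the two-form $\o=\sum_{i=1}^N d\eta_i\wedge dz_i$ is closed, nondegenerate and of internal degree $-1$, hence an odd symplectic structure, and that the displayed bracket is the Poisson (Gerstenhaber) bracket it induces; that $\{\cdot,\cdot\}$ obeys the shifted Poisson axioms then follows from Proposition~\ref{PBV} via the footnote identity $(-1)^{|f|}\ell_2^{\tilde\Delta}(f,g)=(-1)^{|f|}\ell_2^{\tilde K_S}(f,g)=\tfrac{\pi^{Mb}}{\g}\{f,g\}$, which is the point to check. The first equality I would get by feeding $\tilde\Delta=\tfrac{\pi^{Mb}}{\g}\sum_i\tfrac{\partial}{\partial z_i}\tfrac{\partial}{\partial\eta_i}$ into $\ell_2^{\tilde\Delta}(f,g)=\tilde\Delta(fg)-\tilde\Delta(f)\,g-(-1)^{|f|}f\,\tilde\Delta(g)$ and applying the (graded) Leibniz rule: everything collapses to $\tfrac{\pi^{Mb}}{\g}\sum_i\bigl(\partial_{\eta_i}f\cdot\partial_{z_i}g+(-1)^{|f|}\partial_{z_i}f\cdot\partial_{\eta_i}g\bigr)$, which is $(-1)^{|f|}$ times $\tfrac{\pi^{Mb}}{\g}\{f,g\}$. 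The second equality holds because $\tilde Q_S=\tilde K_S-\tilde\Delta$ is a degree-$1$ derivation of the product (this is part of the dGBV structure of Theorem~\ref{mainthm}), so $\ell_2^{\tilde Q_S}=0$ and hence $\ell_2^{\tilde K_S}=\ell_2^{\tilde\Delta}$.

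For the third datum I would take $\Delta:=\tfrac{\g}{\pi^{Mb}}\tilde\Delta=\sum_{i=1}^N\tfrac{\partial}{\partial z_i}\tfrac{\partial}{\partial\eta_i}$ for the BV Laplacian attached to $\o$, and $\hat S$ from \eqref{ssdef} for the BV action functional. One first records that $\hat S$ really does belong to $\widetilde\cAb^0=\widetilde{A(b)}$: the monomial $y_\ell^{p^m}(\tilde g_{\ell,\ud u}\ud x^{\ud u})^{p^m}$ carries $\ud y$-weight $|\ud v|=p^m$, so its normalised coefficient $\g_m\tilde g_{\ell,\ud u}^{p^m}\pi^{-Mbp^m}$ has valuation at least $\bigl(\tfrac{p}{p-1}-b\bigr)p^m-(m+1)$ by \eqref{estimate}, which tends to $+\infty$ precisely because $b<\tfrac{p}{p-1}$. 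Since $\partial\hat S/\partial z_i$ involves no $\ud\eta$, the very definition $\tilde K_S=\tfrac{\pi^{Mb}}{\g}\sum_i\bigl(\tfrac{\partial\hat S}{\partial z_i}+\tfrac{\partial}{\partial z_i}\bigr)\tfrac{\partial}{\partial\eta_i}$ rewrites as the operator identity $\tilde K_S(f)=\tfrac{\pi^{Mb}}{\g}\{\hat S,f\}+\tfrac{\pi^{Mb}}{\g}\Delta(f)$ for every $f$; specialising $f=\hat S$ gives $\tilde K_S(\hat S)=\tfrac{\pi^{Mb}}{\g}\bigl(\{\hat S,\hat S\}+\Delta(\hat S)\bigr)$, and since $\pi^{Mb}/\g$ is a nonzero scalar this is exactly the claimed equivalence. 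Finally both sides vanish for trivial reasons: $\hat S$ is $\ud\eta$-free, so $\partial_{\eta_i}\hat S=0$, which forces $\Delta(\hat S)=0$ and $\{\hat S,\hat S\}=0$ (equivalently, every summand of $\tilde K_S$ ends in $\partial_{\eta_i}$, so $\tilde K_S(\hat S)=0$ outright).

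The single real obstacle is foundational, not computational: Definition~\ref{bv2} also calls for a Berezinian measure compatible with $\o$, and there is no $p$-adically bounded analogue of Haar or Lebesgue measure on $\Bbbk^N$. I would handle this exactly as flagged in the footnote to the statement: the measure is replaced by the BV Laplacian $\Delta$ itself, equivalently by a cochain map $(\widetilde\cAb,\tilde K_S)\to(\Bbbk,0)$ playing the role of the period/BV integral of \cite{KPP21}, so that ``quantum BV formalism for $X$'' is to be understood in this Laplacian-theoretic sense and a genuine $p$-adic measure-theoretic version of the heuristic \eqref{fpi} is deferred to future work. Modulo that reading, the proposition is just the bookkeeping of the three steps above.
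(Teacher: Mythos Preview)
Your proof is correct and follows the same approach as the paper---namely, observing that the data of Definition~\ref{bv2} are algebraic and hence carry over to the $p$-adic setting, with the Berezinian measure replaced by the BV Laplacian as in the footnote. Your argument is in fact considerably more detailed than the paper's own proof, which is a brief sketch: you explicitly verify the footnote identity $(-1)^{|f|}\ell_2^{\tilde\Delta}(f,g)=\tfrac{\pi^{Mb}}{\g}\{f,g\}$, check that $\hat S\in\widetilde{A(b)}$ via the estimate \eqref{estimate}, and spell out why the quantum master equation holds trivially, whereas the paper simply records the Darboux chart and the formula for $\Delta$ and leaves the rest implicit.
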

\begin{proof}
The definitions \ref{bv1} and \ref{bv2} basically make sense over $\Bbbk$ except the complex number $i$, the formal parameter $\hbar$, and the Berezinian measure part, since the definitions are algebraic. 
$T^*[-1]\left(D_N(p^{b|\ud v|})\right)$ is a $p$-adic supermanifold whose algebraic structure sheaf is given by $\widetilde \cAb$.
Using a Darboux coordinate $\ud \eta=(\eta_1, \cdots, \eta_N), \ud z=(z_1, \cdots, z_N)$ of $\cF$, $\o=\sum_{i=1}^N d\eta_i \wedge d z_i$ defines an odd symplectic structure on $T^*[-1]\left(D_N(p^{b|\ud v|})\right)$.
The BV Laplacian associated to the Darboux coordinate is given by
\be
\Delta:=\sum_{i=1}^N\frac{\partial}{\partial z_i}\frac{\partial}{\partial\eta_i}.
\ee
\end{proof}

The uncertainty principle of quantum field theory is described by the following equality
\bea \label{auc}
\hbar z_i \circ \frac{\partial}{\partial z_i}= \frac{\partial}{\partial z_i} \circ \hbar  z_i + \hbar
\eea
where $z_i$ is ``the position operator'' and $\frac{\partial}{\partial z_i}$ is ``the momentum operator''.\footnote{This is equivalent to \eqref{elltwo} which is the failure of the Leibniz rule of the BV operator $\Delta_{\mu_0}$.} These operators $z_i$ and $\frac{\partial}{\partial z_i}$ are key examples of symmetries\footnote{These operators act on $A_{\BP(\cE)}$.} in $\QFT_S^{\bC}$. According to the philosophy of Noether's principle in physics, there should be some invariants associated to these symmetries; the middle-dimensional cohomology $H_{dR, \pr}^{n-k}(X, \bC)$ and its period integrals in \eqref{fpi} and period matrices are such examples of invariants.\footnote{We do not delve into a precise meaning of invariants, since we only deal with physical ideas behind the paper.} In a non-archimedean quantum field theory, there is a new symmetry, namely the $p$-power Frobenius map or its left inverse $T_q$ in \eqref{DPSI}, which should induce a new interesting invariant. In our example $\operatorname{QFT}_{S}^\Bbbk$, this invariant is the zeta function associated to $X$ or the number of $\bF_{q^m}$-rational points of $X$ for each $m\geq 1$. A non-archimedean version of the uncertainty principle of quantum field theory can be described by
\bea\label{nuc}
T_q\circ z_i\frac{\partial}{\partial z_i}=qz_i\frac{\partial}{\partial z_i}\circ T_q.
\eea

If we adapt the analogy between $\hbar$ in archimedean theory and $q=p^a$ in non-archimedean theory, the $p$-adic 0-dimensional field theory $\QFT^{\Bbbk}_S$, in some sense (Definition \ref{bv3}), can be viewed as a BV quantization of classical BV formalism over $\bF_q$; we refer to (b) of Theorem \ref{ftheorem} and subsection \ref{sub4.1}.

\begin{table}[ht]
	\caption{0-dimensional field theory over $\bC, \bF_q,$ and  $\Bbbk$}
	\begin{tabular}{|c|c|c|c|}
	\hline
	&&&\\[-1.5ex]
	space of fields & $\bC^N$ & $\bF_q^N$ & $D_N(p^{b|\ud v|}) \subset \Bbbk^N$ \\[1ex]
	\cline{1-1}\cline{2-2}\cline{3-3}\cline{4-4}
	&&&\\[-1ex]
	action functional & $S(\ud z) \in A_{\BP(\cE)}=\bC[\ud z]$ & $S(\ud z) \in A=\bF_q[\ud z] $ & $\hat S(\ud z) \in \widetilde{A(b)}$\\[2ex]
	\hline
	&&&\\[-1ex]
	symmetries & $z_i$ and $\frac{\partial}{\partial z_i}$ & $z_i \mapsto z_i^q$ & Dwork operator $T_q$\\[2ex]
	\hline
	&&&\\[-1ex]
	space of BV fields & $T^*[-1](\bC^N)$ &$T^*[-1](\bF_q^N)$ & $T^*[-1]\left(D_N(p^{b|\ud v|})\right)$\\[2ex]
	\hline
	&&&\\[-1ex]
	quantum invariants & period integrals $\int_\g \o$ &$|X_{\ud G}(\bF_{q^m})|=\frac{|Y_{\ud G}(\bF_{q^m})|-1}{q^m-1}$ & $p$-adic Dwork operator $\Psi_S$\\[2ex]
	\hline
	\end{tabular}
	\label{Table1}
\end{table}



Let $Y_{\ud G}$ be the affine complete intersection in $\bA^N$ defined by $G_1(\ud x), \cdots, G_k(\ud x)$.
Let $\psi_q:\bF_q \to \bC_p^*$ be an additive character (see Lemma \ref{chr}). Let $Y_{\ud G}$ be the affine complete intersection in $\bA^N$ defined by $G_1(\ud x), \cdots, G_k(\ud x)$.
Then one can show that
\begin{eqnarray} \label{esum}
\sum_{\ud z \in \bA^N(\bF_q^m)} \psi_q(\Tr_{q^m/q} S(\ud z)) = |Y_{\ud G}(\bF_{q^m})| \cdot q^{mk}
\end{eqnarray}
where $\Tr_{q^m/q}:\bF_{q^m} \to \bF_q$ is the trace map. We have $|X_{\ud G}(\bF_{q^m})|=\frac{|Y_{\ud G}(\bF_{q^m})|-1}{q^m-1}$. Notice that the logarithms of the zeta functions
\be
\log \left( Z(Y_{\ud G}; T)\right)&=&\sum_{m\geq0}|Y_{\ud G}(\bF_{q^m})|\frac{T^m}{m}=
\sum_{m\geq0} \left(\sum_{\ud z \in \bA^N(\bF_{q^m})} \psi_q(\Tr_{q^m/q} S(\ud z))  \right)\frac{T^m}{m}, \quad \\
\log \left( Z(X_{\ud G}; T)\right)&=&\sum_{m\geq0}|X_{\ud G}(\bF_{q^m})|\frac{T^m}{m}
\ee
have the shape of ``Feynman path integral'' of the action functional $S(\ud z) \in \bF_q$.
The characteristic polynomial of the cochain map $\Psi_S$ of $(\widetilde \cAb, \tilde K_S)$ gives us the zeta function; part (d) of Theorem \ref{ftheorem}.
%

\section{Homotopy Lie deformation formula for the zeta function}\label{sec5}

\subsection{$\pi$-adic filtered complex}\label{sub5.1}

Here we prove (b) of Theorem \ref{ftheorem}.
For this we need to understand a precise relationship between $(\widetilde \cAb, \cdot, \tilde \Delta)$ and $(\cA, \cdot, \Delta)$. Let us define a filtration on $(\widetilde \cAb, \cdot, \tilde \Delta)$ which is compatible with the differential $\tilde \Delta$. The technical conditions on $b$ and $M$ and the factor $\frac{\pi^{Mb}}{\g}$ is designed to accomplish this by Adolphson and Sperber in \cite{AdSp08}.
Recall that $\pi$ is a uniformizer for $\cO_\Bbbk$. Following section 3, \cite{AdSp08}, define a decreasing filtration $\{F^s \widetilde \cAb\}_{s\in \bZ}$ on $\widetilde \cAb$:
\begin{eqnarray}\label{fil}
F^s \widetilde \cAb^{-m} =\bigoplus_{r+t =m} \bigoplus_{\substack{1\leq i_1 < \cdots < i_r \leq k \\ k+1 \leq j_1 < \cdots < j_t \leq N }} \pi^{Mb (k-r)} \tilde F^s \widetilde{\Ab} \cdot \eta_{i_1} \cdots \eta_{i_r}\cdot \eta_{j_1} \cdots \eta_{j_t}, \quad 0 \leq m \leq N,
\end{eqnarray}
where 
\begin{eqnarray*}
\tilde F^s\widetilde{\Ab} =\left\{ \sum_{ (\ud u, \ud v) \in \bNn^{N}}  a_{ \ud u, \ud v} \pi^{Mb |\ud v|} \ud   x^{\ud u}\ud  y^{\ud v}: a_{\ud u, \ud v} \in  \pi^s\cO_\Bbbk \text{ for all } (\ud u, \ud v) \in \bNn^{N}  \right\}.
\end{eqnarray*}
Note that
\bea \label{fabs}
F^s\widetilde{\Ab} =F^s\widetilde \cAb^{0}=\left\{ \sum_{ (\ud u, \ud v) \in \bNn^{N}}  a_{ \ud u, \ud v} \pi^{Mb( |\ud v| + k)} \ud x^{\ud u}  \ud y^{\ud v}: a_{\ud u, \ud v} \in  \pi^s\cO_\Bbbk \text{ for all } (\ud u, \ud v) \in \bNn^{N}  \right\}.
\eea

Then a simple calculation confirms that 
$\tilde \Delta(F^s \widetilde \cAb^{-m}) \subset F^s \widetilde \cAb^{-m+1}$,
using the fact $ \frac{1}{p-1} < b < \frac{p}{p-1}$.
(here the factor $\frac{\pi^{Mb}}{\gamma}$ in $\tilde \Delta=\frac{\pi^{Mb}}{\gamma}\cdot\sum_{i=1}^N\frac{\partial}{\partial z_i}\frac{\partial}{\partial\eta_i}$ plays a role). Therefore the filtration (\ref{fil}) makes $(\widetilde \cAb, \tilde \Delta)$ into a $\pi$-adic filtered complex.

For each $0 \leq m \leq N$, define a $\Bbbk$-linear map $\cR: F^0\widetilde \cAb^{-m} \to \cA^{-m}$, where $\cA$ was given in (\ref{finite}), by additivity and the formula:
\begin{eqnarray*}
\sum_{ (\ud u, \ud v) \in \bNn^{N}}  a_{ \ud u, \ud v} \pi^{Mb (|\ud v|+k-r)} \ud x^{\ud u}  \ud y^{\ud v} 
\cdot \eta_{i_1} \cdots \eta_{i_r}\cdot \eta_{j_1} \cdots \eta_{j_t}
  \mapsto \sum_{ (\ud u, \ud v) \in \bNn^{N}} \overline a_{ \ud u, \ud v} \ud x^{\ud u}  \ud y^{\ud v} 
\cdot \eta_{i_1} \cdots \eta_{i_r}\cdot \eta_{j_1} \cdots \eta_{j_t},
\end{eqnarray*}
where $\overline  a_{ \ud u, \ud v}$ is the reduction of $ a_{ \ud u, \ud v}$ modulo the maximal ideal of $\cO_\Bbbk$.
Since $ a_{\ud u, \ud v} \to 0$ as $|(\ud u, \ud v)| \to \infty$, the image $\cR(\xi)$ for $\xi \in \widetilde \cAb^{-m}$ is a finite sum.
It is not difficult to see that this $\Bbbk$-linear map is surjective with kernel $F^1\widetilde \cAb^{-m}$ (using the conditions $\frac{1}{p-1} < b$ and $\frac{Mb}{(p-1)p}, \frac{M}{p-1} \in \bZ$), hence $\cR$ induces a linear isomorphism
$$
\cR: F^0\widetilde \cAb^{-m}/ F^1\widetilde \cAb^{-m} \simeq \cA^{-m},
$$
for each  $0 \leq m \leq N$. 
Note that this map $\cR$ is not a ring homomorphism.
We can choose a $\Bbbk$-linear section $s_\cR: \cA \to F^0\widetilde \cAb$ such that $\cR \circ s_\cR = \id$.

\begin{proposition}\label{fprop}
The $\Bbbk$-linear map $\cR$ induces an isomorphism of cochain complexes over $\bF_q$
$$
\cR: (F^0\widetilde \cAb/ F^1\widetilde \cAb, \tilde \Delta) \simeq (\cA, 0),
$$
where $(\cA, 0)$ is the cochain complex $\cA$ with the zero differential.
\end{proposition}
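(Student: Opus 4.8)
The plan is to notice that almost everything is already in place: the discussion just before the proposition exhibits, in each degree $-m$, a $\Bbbk$-linear isomorphism $\cR: F^0\widetilde \cAb^{-m}/F^1\widetilde \cAb^{-m}\xrightarrow{\sim}\cA^{-m}$ onto the degree $-m$ piece of $\cA=\bF_q[\ud z][\ud\eta]$, the reduction landing in $\bF_q$-coefficients because the residue field of $\cO_\Bbbk$ is $\bF_q$. So the proposition reduces to the single point that $\cR$ is compatible with the differentials; since the differential on $(\cA,0)$ is zero, this is the same as the assertion
\[
\tilde\Delta\bigl(F^0\widetilde\cAb\bigr)\subseteq F^1\widetilde\cAb ,
\]
i.e.\ that $\tilde\Delta$ induces the zero map on $F^0\widetilde\cAb/F^1\widetilde\cAb$. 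This is a sharpening of the inclusion $\tilde\Delta(F^s\widetilde\cAb^{-m})\subseteq F^s\widetilde\cAb^{-m+1}$ already recorded.

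To prove it I would argue on the normalized monomial generators of $F^0\widetilde\cAb^{-m}$, namely the $c\,\pi^{Mb(|\ud v|+k-r)}\,\ud x^{\ud u}\ud y^{\ud v}\,\eta_{i_1}\cdots\eta_{i_r}\eta_{j_1}\cdots\eta_{j_t}$ with $c\in\cO_\Bbbk$, $1\le i_1<\cdots<i_r\le k$, $k+1\le j_1<\cdots<j_t\le N$, $r+t=m$. Write $\tilde\Delta=\tfrac{\pi^{Mb}}{\g}\,\delta$ with $\delta:=\sum_{i=1}^N\pa{z_i}\pa{\eta_i}$. A summand $\pa{z_i}\pa{\eta_i}$ acts nontrivially only if $\eta_i$ appears, and then: if $i\le k$ then $z_i$ is a $y$-variable, so $\pa{\eta_i}$ lowers $r$ by $1$ while $\pa{z_i}$ lowers $|\ud v|$ by $1$; if $i>k$ then $z_i$ is an $x$-variable, so $\pa{\eta_i}$ and $\pa{z_i}$ leave $r$ and $|\ud v|$ unchanged. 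Either way the normalizing exponent $Mb(|\ud v|+k-r)$ does not change, so $\pa{z_i}\pa{\eta_i}$ sends the generator to an $\cO_\Bbbk$-multiple — the extra scalar being one of the exponents $v_i$, $u_j$, up to sign — of a normalized generator of $F^0\widetilde\cAb^{-m+1}$. Hence $\delta$ preserves the lattice: $\delta\bigl(F^0\widetilde\cAb^{-m}\bigr)\subseteq F^0\widetilde\cAb^{-m+1}$.

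It then remains to estimate the universal prefactor: $\val_\pi\!\bigl(\tfrac{\pi^{Mb}}{\g}\bigr)=Mb-\tfrac{M}{p-1}=M\bigl(b-\tfrac1{p-1}\bigr)$, which is a positive integer — an integer because $\tfrac{Mb}{(p-1)p}\in\bZ$ forces $Mb\in\bZ$ and $\tfrac{M}{p-1}\in\bZ$, and positive because $b>\tfrac1{p-1}$ — hence $\ge1$, so $\tfrac{\pi^{Mb}}{\g}\in\pi\,\cO_\Bbbk$. Since $\pi\,\tilde F^0\widetilde\Ab=\tilde F^1\widetilde\Ab$, we get $\pi\,F^0\widetilde\cAb^{-m+1}=F^1\widetilde\cAb^{-m+1}$, and therefore
\[
\tilde\Delta\bigl(F^0\widetilde\cAb^{-m}\bigr)=\tfrac{\pi^{Mb}}{\g}\,\delta\bigl(F^0\widetilde\cAb^{-m}\bigr)\subseteq F^1\widetilde\cAb^{-m+1}.
\]
Passing from generators to arbitrary elements is harmless because $F^1\widetilde\cAb$ is a closed $\cO_\Bbbk$-submodule and $\tilde\Delta$ is a well-defined continuous operator on $\widetilde\cAb$; this gives $\tilde\Delta(F^0\widetilde\cAb)\subseteq F^1\widetilde\cAb$, so $\tilde\Delta$ vanishes on $F^0\widetilde\cAb/F^1\widetilde\cAb$ and $\cR$ is the asserted isomorphism of cochain complexes.

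\textbf{The main obstacle.} Nothing here is conceptually hard — it is the ``simple calculation'' the text alludes to — but one has to be careful about the bookkeeping of $\pi$-powers in the middle step: the factor $\pi^{Mb}$ that $\pa{z_i}$ removes when $z_i$ is a $y$-variable must cancel exactly against the change of the normalizing exponent $Mb(k-r)$ produced when $\pa{\eta_i}$ deletes a low-index $\eta$, so that only the universal factor $\tfrac{\pi^{Mb}}{\g}$ survives; it is in this cancellation and in the final valuation estimate that the hypotheses $\tfrac1{p-1}<b$, $\tfrac{Mb}{(p-1)p}\in\bZ$ and $\tfrac{M}{p-1}\in\bZ$ are used.
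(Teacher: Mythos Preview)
Your proof is correct and is exactly the argument the paper has in mind: the paper states the proposition without a separate proof, relying on the preceding ``simple calculation'' and the already-established degreewise isomorphism $\cR:F^0\widetilde\cAb^{-m}/F^1\widetilde\cAb^{-m}\simeq\cA^{-m}$, and your write-up just makes that calculation explicit by tracking the $\pi$-exponents on normalized generators and using $\val_\pi(\pi^{Mb}/\gamma)=M(b-\tfrac{1}{p-1})\ge 1$ to push $\tilde\Delta(F^0)$ into $F^1$.
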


\begin{proof}[A proof of (b) of Theorem \ref{ftheorem}]

The part $(b)$ follows by using the decreasing filtration and the $\Bbbk$-linear map $\cR$ in Proposition \ref{fprop}.
A same computation confirms that
\begin{align}\label{isot}
 (F^0\widetilde \cAb/ F^1\widetilde \cAb, \tilde K_S) \xrightarrow{\cR} (\cA, Q_S),
\end{align}
where $(\cA,Q_S)$ is given in (\ref{finite}), 
is an isomorphism of cochain complexes over $\bF_q$.
\end{proof}

\subsection{The computation of cohomology}\label{sub5.2}

Here we prove (c) of Theorem \ref{ftheorem}.
For this we will compare our BV algebra $(\widetilde \cAb, \cdot, K_S)$ with a twisted de Rham complex, so called the $p$-adic Dwork complex, $(\O_b^\bullet, \wedge, D)$ which appeared in section 2, \cite{AdSp08}.
We briefly review the Dwork construction closely following section 2, \cite{AdSp08}.
The degree $m$-th module of $\O_b^\bullet$ is given by
$$
\O_b^m :=  \bigoplus_{1\leq i_1 < \cdots < i_m\leq N} \widetilde \Ab \wedge dz_{i_1}\wedge \cdots \wedge dz_{i_m}
$$
for each $m \geq 0$. Here $\wedge$ is the wedge product on the twisted de Rham complex $\O_b^\bullet$ of $\widetilde \Ab$.
Note that $\O_b^0 = \widetilde \Ab$ and $\O_b^\bullet = \bigoplus_{0 \leq m \leq N} \O_b^m$.
The differential $D$ is defined by
$$
D(w) = \frac{\pi^{Mb}}{\g}   \left( dw +d \hat S(\ud z) \wedge w \right), \quad w\in \O_b^m
$$
for any $m \geq 0$.

\begin{proposition} \label{clem}
We have the following relationship between $(\O_b^\bullet, D)$ and $(\widetilde \cAb, \tilde K_S)$;

(a) For each $s \in \bZ$, if we define a $\Bbbk$-linear map $J: (\O^s_{b}, D) \to (\widetilde \cAb^{s-N}, \tilde K_S)$ by 
$$
dz_{i_1} \cdots d z_{i_s} \mapsto (-1)^{i_1 + \cdots +i_s - s} ( \cdots \hat{\eta_{i_1}} \cdots \hat{\eta_{i_s}}\cdots)
$$ 
for $1 \leq i_1 < \cdots < i_s \leq N=n+k+1$ and extending
it $\Bbbk$-linearly, then $J \circ D = \tilde K_S \circ J$ and $J$ induces an isomorphism
$$
H^s(\O^\bullet_b,D) \simeq H^{s-N} (\widetilde \cAb,\tilde K_S).
$$
for every $s \in \bZ$.

(b) The map $J$ satisfies that  $J \circ \partial_X = \tilde Q_S \circ J$, 
where $\partial_X$ is the wedge product with $\frac{\pi^{Mb}}{\g} d \hat S(\ud z)$.
\end{proposition}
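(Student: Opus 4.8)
The plan is to realize $J$ as ``contraction with the volume element'' $\eta_1\cdots\eta_N$: this makes it manifestly an isomorphism of graded $\widetilde\Ab$-modules, reduces the chain-map property to one sign bookkeeping, and matches the two summands of $D = \frac{\pi^{Mb}}{\g}\bigl(d + d\hat S(\ud z)\wedge -\bigr)$ with the two summands of $\tilde K_S = \tilde\Delta + \tilde Q_S$; part (b) is then precisely the second of these matchings.

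First I would record that $\O_b^s = \bigoplus \widetilde\Ab\, dz_{i_1}\wedge\cdots\wedge dz_{i_s}$ and $\widetilde\cAb^{s-N} = \bigoplus \widetilde\Ab\cdot\eta_{j_1}\cdots\eta_{j_{N-s}}$ are free $\widetilde\Ab$-modules with bases indexed respectively by the $s$- and $(N-s)$-subsets of $\{1,\dots,N\}$, and that $J$ carries each basis form to $\pm$ the $\eta$-monomial on the complementary index set. Hence $J$ is an $\widetilde\Ab$-linear isomorphism of graded modules (in particular bounded, being an isometry up to signs for the Banach norms). It therefore suffices to prove the two intertwining identities $J\circ D = \tilde K_S\circ J$ and $J\circ\partial_X = \tilde Q_S\circ J$; the cohomology isomorphism in (a) then follows formally.

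The one computational input is the sign identity
\[
J(dz_i\wedge w)\;=\;\pa{\eta_i}\bigl(J(w)\bigr),\qquad w\in\O_b^\bullet,\quad 1\le i\le N,
\]
with $\pa{\eta_i}$ the left Grassmann derivative, both sides understood to vanish when $dz_i$ already occurs in $w$. By $\widetilde\Ab$-linearity it suffices to check this on a basis monomial $w = dz_{i_1}\wedge\cdots\wedge dz_{i_s}$ with $i\notin\{i_1,\dots,i_s\}$, where reordering $dz_i\wedge w$ into increasing order, the normalizing sign of $J$ before and after, and pulling $\eta_i$ to the front of $J(w)$ each contribute an explicit power of $-1$ that cancels --- which is exactly the role of the factor $(-1)^{i_1+\cdots+i_s-s}$ in the definition of $J$. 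Cleanly, one may instead \emph{define} $J(w):=\iota_w(\eta_1\cdots\eta_N)$, letting each $dz_i$ act on $\eta_1\cdots\eta_N$ as $\pa{\eta_i}$, so that the identity is tautological and one only verifies once that this intrinsic $J$ agrees with the formula in the statement. This sign chase is the sole genuine obstacle; the rest is formal.

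Granting the identity: writing $d=\sum_i dz_i\wedge\pa{z_i}$ and using that $\pa{z_i}$ acts only on $\widetilde\Ab$-coefficients, hence commutes with $J$ and with $\pa{\eta_i}$, gives $J(dw)=\sum_i\pa{\eta_i}\pa{z_i}J(w)=\sum_i\pa{z_i}\pa{\eta_i}J(w)=\tfrac{\g}{\pi^{Mb}}\tilde\Delta\,J(w)$; likewise $d\hat S(\ud z)\wedge w=\sum_i\prt{\hat S(\ud z)}{z_i}\,dz_i\wedge w$ gives $J\bigl(d\hat S(\ud z)\wedge w\bigr)=\sum_i\prt{\hat S(\ud z)}{z_i}\,\pa{\eta_i}J(w)=\tfrac{\g}{\pi^{Mb}}\tilde Q_S\,J(w)$. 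Adding and multiplying by $\tfrac{\pi^{Mb}}{\g}$ yields $J\circ D=(\tilde\Delta+\tilde Q_S)\circ J=\tilde K_S\circ J$, hence $H^s(\O_b^\bullet,D)\simeq H^{s-N}(\widetilde\cAb,\tilde K_S)$ for all $s$, proving (a). Since $\partial_X=\tfrac{\pi^{Mb}}{\g}\,d\hat S(\ud z)\wedge -$, the second computation above reads $J\circ\partial_X=\tilde Q_S\circ J$, which is (b).
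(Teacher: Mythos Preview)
Your proof is correct and takes essentially the same approach as the paper, which simply says ``these follow from direct computations; this is a version of the odd (fiberwise) Fourier transform'' and cites \cite[Proof of Theorem 4.4.12]{Mnev}. Your realization of $J$ as contraction with the volume element $\eta_1\cdots\eta_N$ \emph{is} the odd fiberwise Fourier transform, and you have supplied the sign verification $J(dz_i\wedge w)=\partial_{\eta_i}J(w)$ that the paper leaves implicit.
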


\begin{proof}
These follow from direct computations; this is a version of the odd (fiberwise) Fourier transform in \cite[Proof of Theorem 4.4.12]{Mnev}.
\end{proof}

\begin{proof}[A proof of (c) of Theorem \ref{ftheorem}]
By Proposition \ref{clem}, 
$$
H^N(\Omega_b^\bullet, D) \simeq H^0(\widetilde \cAb, \tilde K_S).
$$
Because the $\Bbbk$-dimension of $H^N(\Omega_b^\bullet, D)$ is shown to be the degree of $P(T)$ in \cite{AdSp08},
we conclude that the degree of $P(T)$ is equal to the $\Bbbk$-dimension of $H^0(\widetilde \cAb, \tilde K_S)$.
\end{proof}

\begin{remark}
Proposition \ref{clem} implies that two cochain complexes $(\O_b^\bullet, D)$ and $(\widetilde \cAb, \tilde K_S)$ are degree-twisted isomorphic to each other.
But we emphasize that the natural product structure, the wedge product, on $\O_b^\bullet$ and  the super-commutative product $\cdot$ on $\widetilde \cAb$
are quite different and $J$ is \textit{not} a ring isomorphism. It is crucial for us to use the super-commutative product $\cdot $ on 
$\widetilde \cAb$ to get the main theorems of this article.
\end{remark}
%
%

\subsection{Zeta functions and $p$-adic Dwork operators} \label{sub5.3}

We will prove (d) of Theorem \ref{ftheorem} this subsection.

\begin{proof}[A proof of (d) of Theorem \ref{ftheorem}]
First note that (using the function in \eqref{exp1})
\begin{align*}
e^{\tilde \Gamma_{\ud t}}
=\prod_{i,\underline{u}, \ud w}\hat{\theta}\left(y_i\tilde{g}_{i,\underline{u}, \ud w}\underline{x}^{\underline{u}}\ud t^{\ud w} \right)=\hat{E}_{\tilde S(\ud t, \ud z)}(\underline{z}), 
\quad e^{\sigma(\tilde \Gamma_{\ud t})}=\hat{E}_{\tilde S(\ud t^q, \ud z)}(\underline{z}),
\end{align*}
where $\sigma$ is given in \eqref{sgm} and $\tilde \Gamma_{\ud t}$ is given in \eqref{onef}.
Similarly, we have that
\begin{align*}
e^{\Gamma_{\ud t}}
=E_{\tilde S(\ud t, \ud z)}(\underline{z}),
\end{align*}
where $\Gamma_{\ud t}$ is given in \eqref{twof}.
Because
\begin{align*}
\tilde \Delta_{\tilde \Gamma_{\ud t}}:=
e^{-\tilde \Gamma_{\ud t}} \circ \tilde \Delta \circ e^{\tilde \Gamma_{\ud t}}=
\hat{E}_{\tilde S(\ud t, \ud z)}(\underline{z})^{-1}\circ \tilde \Delta \circ\hat{E}_{\tilde S(\ud t, \ud z)}(\underline{z})
=\frac{\pi^{Mb}}{\gamma}\cdot\sum_{i=1}^N\left(\frac{\partial\hat{S}(\ud t, \underline{z})}{\partial z_i}+\frac{\partial}{\partial z_i}\right)\frac{\partial}{\partial\eta_i},
\end{align*}
\begin{align*}
\Psi_{\BP^n, \sigma,\tilde \Gamma_{\ud t}}
=e^{-\sigma(\tilde \Gamma_{\ud t})} \circ \Psi_{\BP^n} \circ e^{\tilde \Gamma_{\ud t}}
=\hat{E}_{\tilde S(\ud t^q, \ud z)}(\underline{z})^{-1}\circ \Psi_{\BP^n} \circ\hat{E}_{\tilde S(\ud t, \ud z)}(\underline{z})
=\Psi_{\BP^n}\circ E_{\tilde S(\ud t, \ud z)}(\underline{z}),
\end{align*}
we see that 
the operator 
$\alpha_{n+k+1}$ on $H^{n+k+1}(\O_b^\bullet, D)$, in Corollary 6.5, \cite{AdSp08}, corresponds exactly (by (\ref{xdef})) to the operator $\Psi_S$ on $H^0(\widetilde \cAb,\tilde K_S)$ under the isomorphism $H^{n+k+1}(\O_b^\bullet, D) \simeq H^0(\widetilde \cAb,\tilde K_S)$ in Proposition \ref{clem}.
Then the part $(d)$ of Theorem \ref{ftheorem} 
\begin{eqnarray}
P(q^k T)= \det(1 - T \cdot \Psi_S \big|H^0(\widetilde \cAb, \tilde K_S))
\end{eqnarray}
follows from Corollary 6.5, \cite{AdSp08}.
%
\end{proof}

\subsection{The Bell polynomials and a deformation formula for the Dwork operator $\Psi_S$ }
\label{sub5.4}

As an application of our theory, we provide an $L_\infty$-homotopy deformation formula for $\Psi_S$ using Definition \ref{defdesc}, the Bell polynomials, and some Maurer-Cartan solutions $\hat S$ and $\G$.

The \emph{Bell polynomials} $B_n(x_1,\cdots,x_n)$ are defined by the power series expansion
\begin{align}\label{Bpoly}
\exp\left(\sum_{i\geq1}x_i\frac{t^i}{i!}\right)=1+\sum_{n\geq1}B_n(x_1,\cdots,x_n)\frac{t^n}{n!}.
\end{align}
For example, $B_1(x_1)=x_1, B_2(x_1, x_2)=x_1^2+x_2$, $B_3(x_1,x_2,x_3)=x_1^3+3x_1x_2+x_3, \cdots$.

The deformation theory based on the Maurer-Cartan equation attached to $(\cAb, \widetilde \Delta, \ell_2^{\tilde \Delta})$ naturally leads to the following formula.

\begin{theorem} \label{stheorem} 
Let (recall Definition \ref{ssdef})
\begin{align*}
\Gamma=\sum_{j=0}^{a-1}\sum_{i,\underline{u}}\sum_{\ell\geq0}\frac{ \gamma^{p^\ell} \left(\tilde{g}_{i,\underline{u}}y_i\underline{x}^{\underline{u}}\right)^{p^{j+\ell}}}{p^{\ell}}.
\end{align*}
For any homogeneous $\lambda\in\widetilde \cAb$, we have
\begin{align*}
\tilde K_S(\lambda)&=\tilde \Delta(\lambda) + \sum_{m\geq 1}\frac{1}{m!}\ell_{m+1}^{\tilde \Delta} (\hat S, \cdots, \hat S, \lambda)=\tilde \Delta(\lambda)+\ell_2^{\tilde \Delta}(\hat S, \lambda)=\frac{\pi^{Mb}}{\gamma}\cdot\sum_{i=1}^N\left(\frac{\partial\hat{S}( \underline{z})}{\partial z_i}+\frac{\partial}{\partial z_i}\right)\frac{\partial}{\partial\eta_i}(\l),\\
\Psi_{S}(\lambda)&=\Psi_{\BP^n}(\lambda)+\sum_{m\geq1}\sum_{\substack{j+k=m \\ j,k\geq0}}\frac{1}{j!k!} B_j(\phi^{\Psi_{\BP^n}}_1(\Gamma),\cdots,\phi^{\Psi_{\BP^n}}_j(\Gamma, \cdots, \Gamma))\cdot\phi^{\Psi_{\BP^n}}_{k+1}(\Gamma, \cdots, \Gamma,\lambda),
\end{align*}
where $B_n(x_1,\cdots,x_n)$ is the Bell polynomial defined in (\ref{Bpoly}), and $\phi^{\Psi_S}_1(\Gamma,\lambda):=\phi^{\Psi_{\BP^n}}_1(\lambda)=\Psi_{\BP^n}(\lambda)$.
\end{theorem}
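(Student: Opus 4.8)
The plan is to obtain both formulas by combining the explicit descriptions of $\tilde K_S$ and $\Psi_S$ produced in the proof of Theorem~\ref{mainthm} with the descendant $L_\infty$-calculus of Definition~\ref{defdesc} and a single combinatorial identity of exponential-formula type; the Bell polynomials will then appear from their generating function \eqref{Bpoly}. The formula for $\tilde K_S$ is essentially a recapitulation of Section~\ref{sec4}: from the proof of Theorem~\ref{mainthm} we have $\tilde K_S=\tilde\Delta_{\tilde\Gamma_{\ud t}}\big|_{\ud t=\ud 1}$ with $\tilde\Gamma_{\ud t}\big|_{\ud t=\ud 1}=\hat S$ (Definition~\ref{ssdef}); since $\widetilde{\cAb}^{1}=0$, the element $\hat S$ is automatically a Maurer--Cartan solution for $(\widetilde{\cAb},\ud\ell^{\tilde\Delta})$, so \cite[Lemma~3.1]{PP} gives $\tilde K_S(\lambda)=\sum_{m\ge 0}\tfrac1{m!}\ell_{m+1}^{\tilde\Delta}(\hat S,\dots,\hat S,\lambda)$. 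As $\tilde\Delta$ is a differential operator of order $\le 2$ we have $\ell_m^{\tilde\Delta}=0$ for $m\ge 3$, so only $m=0,1$ survive and the series collapses to $\tilde\Delta(\lambda)+\ell_2^{\tilde\Delta}(\hat S,\lambda)$; expanding $\ell_2^{\tilde\Delta}$ by \eqref{elltwo}, using $|\hat S|=0$, $\tilde\Delta=\tfrac{\pi^{Mb}}{\gamma}\sum_i\tfrac{\partial}{\partial z_i}\tfrac{\partial}{\partial\eta_i}$ and the Leibniz rule, yields the closed form on the right.

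For $\Psi_S$, I would start from the identity $\Psi_S=\Psi_{\BP^n,\sigma,\tilde\Gamma_{\ud t}}\big|_{\ud t=\ud 1}=\big(\Psi_{\BP^n}\circ e^{\Gamma_{\ud t}}\big)\big|_{\ud t=\ud 1}=\Psi_{\BP^n}\circ e^{\Gamma}$ established in the proof of Theorem~\ref{mainthm} (see \eqref{xdef} and \eqref{twof}), where $\Gamma=\Gamma_{\ud t}\big|_{\ud t=\ud 1}$ is exactly the element in the statement and $e^{\Gamma}=E_{\tilde S}(\ud z)\in\widetilde{A(\frac bq)}$, so that $\Psi_{\BP^n}\circ e^{\Gamma}$ is a well-defined completely continuous operator ($\Psi_{\BP^n}\colon\widetilde{A(\frac bq)}\to\widetilde{A(b)}$). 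Thus $\Psi_S(\lambda)=\Psi_{\BP^n}(e^{\Gamma}\cdot\lambda)$, and the theorem reduces to the following identity for an arbitrary cochain map $f$ and a degree-zero element $\Gamma$:
\[
f(e^{\Gamma}\cdot\lambda)\;=\;\Big(\sum_{r\ge 0}\tfrac1{r!}\sum_{\pi\in P(r)}\prod_{B\in\pi}\phi^f_{|B|}(\Gamma,\dots,\Gamma)\Big)\cdot\sum_{k\ge 0}\tfrac1{k!}\,\phi^f_{k+1}(\Gamma,\dots,\Gamma,\lambda).
\]
Since every $\phi^f_i(\Gamma,\dots,\Gamma)$ has degree $0$, these elements commute; the exponential formula identifies the first factor with $\exp\!\big(\sum_{i\ge 1}\tfrac1{i!}\phi^f_i(\Gamma,\dots,\Gamma)\big)$, which by \eqref{Bpoly} equals $1+\sum_{j\ge 1}\tfrac1{j!}B_j\big(\phi^f_1(\Gamma),\dots,\phi^f_j(\Gamma,\dots,\Gamma)\big)$. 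Multiplying the two series and collecting the terms with $j+k=m$ then produces precisely the asserted formula with $f=\Psi_{\BP^n}$, the $m=0$ term being $\phi^{\Psi_{\BP^n}}_1(\lambda)=\Psi_{\BP^n}(\lambda)$, which also accounts for the final normalization convention in the statement.

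It remains to prove the displayed identity, and this is where the real work lies. I would first prove, by induction on $m$ using the recursion of Definition~\ref{defdesc} and keeping $\lambda$ in the last argument slot throughout, the partition expansion
\[
f(x_1\cdots x_m\cdot\lambda)\;=\;\sum_{S\subseteq\{1,\dots,m\}}\Big(\sum_{\pi\in P(\{1,\dots,m\}\setminus S)}\;\prod_{B\in\pi}\phi^f(x_B)\Big)\cdot\phi^f_{|S|+1}(x_S,\lambda)
\]
for homogeneous $x_1,\dots,x_m$ of degree $0$, so that the reorderings inside the products carry no Koszul signs. Specializing $x_1=\cdots=x_m=\Gamma$, dividing by $m!$ and summing over $m$, and using $\tfrac1{m!}\binom mk=\tfrac1{k!(m-k)!}$ to separate the choice of $S$ (of size $k$) from the partition of its complement, gives the product of the two series above; the exponential-formula resummation (grouping set partitions by their block-size multiset) then identifies the first series with $\exp\!\big(\sum_{i\ge1}\tfrac1{i!}\phi^f_i(\Gamma,\dots,\Gamma)\big)$, completing the identity.

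The main obstacle is precisely this bookkeeping: verifying that the asymmetric recursion of Definition~\ref{defdesc} reorganizes into the symmetric partition sum with $\lambda$ correctly localized in its block, and that the rearrangement of series is legitimate in the $p$-adic Banach topology of $\widetilde{\cAb}$. The latter is ensured by the estimate \eqref{estimate} together with $e^{\Gamma}=E_{\tilde S}(\ud z)\in\widetilde{A(\frac bq)}$ (so that all intermediate sums, and their total, converge), and it can alternatively be sidestepped by carrying out the entire computation over $\mathfrak a=\Bbbk[[\ud t]]$ with $\Gamma_{\ud t}\in(\ud t)\otimes\Bbbk[[\ud z]]$, where every series is $\ud t$-adically finite and the formal manipulations are unconditionally valid, and only afterwards specializing $\ud t=\ud 1$ as in the proof of Theorem~\ref{mainthm}.
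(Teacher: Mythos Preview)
Your proposal is correct and follows essentially the same route as the paper. For $\tilde K_S$ both arguments are identical: invoke \cite[Lemma~3.1]{PP} with $\tilde\Gamma_{\ud t}|_{\ud t=\ud 1}=\hat S$ and truncate using $\ell_m^{\tilde\Delta}=0$ for $m\ge 3$. For $\Psi_S$ the paper likewise starts from $\Psi_S(\lambda)=\Psi_{\BP^n}(e^{\Gamma}\cdot\lambda)$ and then cites \cite[Lemma~3.3]{PP} to obtain directly the factorization
\[
\Psi_{\BP^n}(e^{\Gamma}\cdot\lambda)=\Big(\sum_{k\ge 0}\tfrac{1}{k!}\,\phi^{\Psi_{\BP^n}}_{k+1}(\Gamma,\dots,\Gamma,\lambda)\Big)\cdot e^{\Phi^{\Psi_{\BP^n}}(\Gamma)},\qquad \Phi^{\Psi_{\BP^n}}(\Gamma)=\sum_{i\ge 1}\tfrac{1}{i!}\phi^{\Psi_{\BP^n}}_i(\Gamma,\dots,\Gamma),
\]
after which the Bell polynomials enter exactly as you describe via \eqref{Bpoly}. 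Your partition-expansion identity for $f(x_1\cdots x_m\cdot\lambda)$ and the subsequent exponential-formula resummation are precisely the content of \cite[Lemma~3.3]{PP}; you are proposing to reprove that lemma from the recursion of Definition~\ref{defdesc} rather than quoting it. That makes your argument more self-contained but otherwise not different in substance.
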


\begin{proof}
Recall that (see \eqref{kg} and \eqref{psg})
\begin{align*}
\begin{split}
\tilde K_S&:=\left.\tilde \Delta_{\tilde \G_{\ud t}}\right|_{\ud t=\ud 1}
=\frac{\pi^{Mb}}{\gamma}\cdot\sum_{i=1}^N\left(\frac{\partial\hat{S}( \underline{z})}{\partial z_i}+\frac{\partial}{\partial z_i}\right)\frac{\partial}{\partial\eta_i} \\
\Psi_S&:= \left.\Psi_{\BP^n,\sigma, \tilde G_{\ud t}}\right|_{\ud t=\ud 1}=\Psi_{\BP^n}\circ E_{\tilde S}(\underline{z})=\Psi_{\BP^n}\circ e^\G.
\end{split}
\end{align*}

By \cite[Lemma 3.1]{PP}, we have
\begin{align*}
\tilde K_S (\lambda)=\left.e^{-\tilde \Gamma_{\ud t}} \circ \tilde \Delta \circ e^{\tilde \Gamma_{\ud t}}\right|_{\ud t=\ud 1} (\lambda)
=\tilde \Delta (\lambda)+\sum_{n\geq2}\frac{1}{(n-1)!}\ell^{\tilde \Delta}_n(\tilde \Gamma,\cdots,\tilde \Gamma,\lambda), \quad \lambda \in \widetilde \cAb
\end{align*}
where we recall from \eqref{onef}:
\begin{align*}
\tilde\Gamma|_{\ud t=1}=\hat S:=
\sum_{i,\underline{u}}\sum_{\ell\geq0}\gamma_\ell \left(\tilde{g}_{i,\underline{u}}y_i\underline{x}^{\underline{u}}\right)^{p^\ell}
\quad (\g_\ell:=\sum_{i=0}^\ell \frac{\g^{p^i}}{p^i} ).
\end{align*}
%
Since $\ell_m^{\tilde \Delta}=0$ for $m\geq 3$, the first part of Theorem \ref{stheorem} follows. Moreover, one can easily check that
$$
 \ell_2^{\tilde \Delta}(\hat S, \lambda)
=\left(\frac{\pi^{Mb}}{\gamma}\cdot\sum_{i=1}^N\frac{\partial\hat{S}( \underline{z})}{\partial z_i}\frac{\partial}{\partial\eta_i}\right)(\lambda), \quad\lambda \in \widetilde \cAb.
 $$
By \cite[Lemma 3.3]{PP}, we have
\begin{align*}
\Psi_{S}(\lambda)
&=\left.e^{-\sigma(\tilde \Gamma_{\ud t})} \circ \Psi_{\BP^n} \circ e^{\tilde \Gamma_{\ud t}}\right|_{\ud t=\ud 1} (\lambda)\\
&=\Psi_{\BP^n}\left(E_{\tilde S}(\underline{z})\cdot \lambda\right)
=\Psi_{\BP^n}\left(e^\G\cdot \lambda\right) \\
&=\left(\phi^{\Psi_{\BP^n}}_1(\lambda)+\sum_{n\geq2}\frac{1}{(n-1)!}\phi^{\Psi_{\BP^n}}_n(\Gamma,\cdots, \Gamma,\lambda)\right)\cdot e^{\Phi^{\Psi_{\BP^n}}(\Gamma)},\quad \lambda \in \widetilde \cAb.
\end{align*}

%
%

We want to expand this as
\begin{align*}
\Psi_S(\lambda)=\Psi_0(\Gamma)+\Psi_1(\Gamma)+\Psi_2(\Gamma)+\Psi_3(\Gamma)+\cdots,
\end{align*}
where $\Psi_n(c  \Gamma)=c^n\Psi_n(\Gamma)$ for $c\in\Bbbk$. 
By the definition of the Bell polynomials, we have
\begin{align*}
e^{\Phi^{\Psi_{\BP^n}}(\Gamma)}=\exp\left(\sum_{n\geq1}\frac{1}{n!}\phi_n^{\Psi_{\BP^n}}(\Gamma, \cdots, \Gamma)\right)=1+\sum_{n\geq1}\frac{1}{n!}B_n(\phi^{\Psi_{\BP^n}}_1(\Gamma),\cdots,\phi^{\Psi_{\BP^n}}_n(\Gamma,\cdots, \Gamma)).
\end{align*}
This expansion has an advantage that
\begin{align*}
B_n\left(\phi^{\Psi_{\BP^n}}_1(c\cdot\Gamma),\cdots,\phi^{\Psi_{\BP^n}}_n(c\cdot\Gamma,\cdots, c \cdot \Gamma)\right)=
c^n\cdot B_n\left(\phi^{\Psi_{\BP^n}}_1(\Gamma),\cdots,\phi^{\Psi_{\BP^n}}_n(\Gamma, \cdots, \Gamma)\right).
\end{align*}
for $c\in\Bbbk$. Therefore we get the expansion of desired form:
\begin{align*}
\Psi_S(\lambda)&=\Psi_{\BP^n}(\lambda)+\sum_{m\geq1}\sum_{\substack{j+k=m \\ j,k\geq0}}\frac{1}{j!k!} B_j(\phi^{\Psi_{\BP^n}}_1(\Gamma),\cdots,\phi^{\Psi_{\BP^n}}_j(\Gamma, \cdots, \Gamma))\cdot\phi^{\Psi_{\BP^n}}_{k+1}(\Gamma, \cdots, \Gamma,\lambda).
\end{align*}
\end{proof}

\begin{remark}
A merit of this formalism is that any smooth complete intersection $X=X_{\underline{G}}\subseteq\mathbf{P}^n_{\mathbb{F}_q}$ can be regarded as being deformed from the projective space $\mathbf{P}^n_{\mathbb{F}_q}$. Hence we may prove many properties of $(\widetilde \cAb, \tilde K_S)$ by checking it on the corresponding properties on the projective space and transporting it to $X$ via the deformation. It would be a good project to see whether this formula for $\Psi_S$ is actually helpful for an algorithmic computation of the zeta function.
\end{remark}

\end{document}